\documentclass{article}
\usepackage[latin1]{inputenc} 
\usepackage[T1]{fontenc} 
\usepackage[english]{babel} 
\usepackage{indentfirst} 
\usepackage{latexsym}
\usepackage{amssymb}
\usepackage{amsmath}
\usepackage{amsthm}
\usepackage{url}
\usepackage{enumitem}

\newtheorem{thm}{Theorem}[section]
\newtheorem{cor}[thm]{Corollary}
\newtheorem{lem}[thm]{Lemma}
\newtheorem{prop}[thm]{Proposition}
\newtheorem{defn}[thm]{Definition}

\newtheorem*{thm*}{Theorem}

\newcommand{\U}{\mathcal{U}}
\newcommand{\N}{\mathbb{N}}

\newcommand{\V}{\mathcal{V}}
\newcommand{\W}{\mathcal{W}}
\newcommand{\bN}{\beta\mathbb{N}}

\newcommand{\fe}{\leq_{fe}}

\begin{document}

\title{Ultrafilters maximal for finite embeddability}

\author{Lorenzo Luperi Baglini\thanks{University of Vienna, Faculty of Mathematics, Oskar-Morgenstern-Platz 1, 1090 Vienna, AUSTRIA, e-mail: \texttt{lorenzo.luperi.baglini@univie.ac.at}, supported by grant P25311-N25 of the Austrian Science Fund FWF.}}
\maketitle
\date{}

\begin{abstract}

In \cite{fe} the authors showed some basic properties of a pre-order that arose in combinatorial number theory, namely the finite embeddability between sets of natural numbers, and they presented its generalization to ultrafilters, which is related to the algebraical and topological structure of the Stone-\v{C}ech compactification of the discrete space of natural numbers. In this present paper we continue the study of these pre-orders. In particular, we prove that there exist ultrafilters maximal for finite embeddability, and we show that the set of such ultrafilters is the closure of the minimal bilateral ideal in the semigroup $(\bN,\oplus)$, namely $\overline{K(\bN,\oplus)}$. As a consequence, we easily derive many combinatorial properties of ultrafilters in $\overline{K(\bN,\oplus)}$. We also give an alternative proof of our main result based on nonstandard models of arithmetic.

\end{abstract}

\section{Introduction}

This paper is a planned sequel of the paper \cite{fe} written by Andreas Blass and Mauro Di Nasso. Both in \cite{fe} and in this present paper it is studied a notion that arose in combinatorial number theory (see \cite{DN} and \cite{ruzsa}, where this notion was implicitly used), the finite embeddability between sets of natural numbers. We recall its definition:

\begin{defn}[\cite{fe}, Definition 1] For $A,B$ subsets of $\N$, we say that $A$ is finitely embeddable in $B$ and we write $A\fe B$ if each finite subset $F$ of $A$ has a rightward translate $F+k$ included in $B$. \end{defn}

We use the standard notation $n+F=\{n+a\mid a\in F\}$ and we use the standard convention that $\N=\{0,1,2,...\}$. In \cite{fe} the authors also considered the generalization of $\leq_{fe}$ to ultrafilters:

\begin{defn}[\cite{fe}, Definition 2] For ultrafilters $\U, \V$ on $\N$, we say that $\U$ is finitely embeddable in $\V$ and we write $\U\fe\V$ if, for each set $B\in\V$, there is some $A\in\U$ such that $A\fe B$. \end{defn}

It is easy to prove (see \cite{fe}, \cite{Tesi}) that both $(\mathcal{P}(\N),\fe)$ and $(\bN,\fe)$ are preorders. In \cite{fe} the authors studied some properties of $\leq_{fe}$, giving in particular many equivalent characterization of the relations $A\fe B$ and $\U\fe\V$ using standard and nonstandard techniques; in this present paper we use similar techniques to continue the study of these pre-orders. Our main result is that there exist ultrafilters maximal for finite embeddability and that the set of such maximal ultrafilters is the closure of the minimal bilateral ideal in $(\bN,\oplus)$, namely $\overline{K(\bN,\oplus)}$. This result allows to easily deduce many combinatorial properties of ultrafilters in $\overline{K(\bN,\oplus)}$, e.g. that for every ultrafilter $\U\in\overline{K(\bN,\oplus)}$, for every $A\in\U$, $A$ has positive upper Banach density, it contains arbitrarily long arithmetic progressions and it is piecewise syndetic\footnote{Let us note that many of these combinatorial properties of ultrafilters in $\overline{K(\bN,\oplus)}$ where already known.}. We will also show that there do not exist minimal sets in $(\mathcal{P}_{\aleph_{0}}(\N),\fe)$ or minimal ultrafilters in $(\bN\setminus\N,\fe)$, where $\mathcal{P}_{\aleph_{0}}(\N)$ is the set of infinite subsets of $\N$ and $\bN\setminus\N$ is the set of nonprincipal ultrafilters. These topics are studied in sections \ref{propy} and \ref{extreme}. In section \ref{NS23} we reprove our main result by nonstandard methods; nevertheless, this is the only section in which nonstandard methods are used, so the rest of the paper is accessible also to readers unfamiliar with nonstandard methods.\par
We refer to \cite{rif12} for all the notions about combinatorics and ultrafilters that we will use, to \cite{rif5}, §4.4 for the foundational aspects of nonstandard analysis and to \cite{davis} for all the nonstandard notions and definitions. Finally, we refer the interested reader to \cite{Tesi}, Chapter 4 for other properties and characterizations of the finite embeddability.

\section{Some basic properties of $(\mathcal{P}(\N),\fe)$}\label{propy}

Let $n$ be a natural number. Throughout this section we will denote by $\mathcal{P}_{\geq n}(\N)$ the set

\begin{equation*} \mathcal{P}_{\geq n}(\N)=\{A\subseteq\N\mid |A|\geq n\}; \end{equation*}

similarly, we will denote by $\mathcal{P}_{\aleph_{0}}(\N)$ the set

\begin{equation*} \mathcal{P}_{\aleph_{0}}(\N)=\{A\subseteq\N\mid |A|=\aleph_{0}\}. \end{equation*}

Moreover, we will denote by $\equiv_{fe}$ the equivalence relation such that, for every $A,B\subseteq\N$,

\begin{equation*} A\equiv_{fe} B\Leftrightarrow A\fe B \wedge B\fe A \end{equation*}

and, for every set $A$, we will denote by $[A]$ its equivalence class. Finally we will denote by $\leq_{fe}$ the ordering induced on the space of equivalence classes defined by setting, for every $A,B\subseteq\N$, 

\begin{equation*} [A]\fe [B]\Leftrightarrow A\fe B.\end{equation*}

It is immediate to see that the relation $\leq_{fe}$ on $\mathcal{P}(\N)$ is not antysimmetric (e.g., $\{2n\mid n\in\N\}\equiv_{fe}\{2n+1\mid n\in\N\}$), so to search for maximal and minimal sets we will actually work in $(\mathcal{P}(\N)/\mathord\equiv_{fe},\fe)$.\par
In \cite{fe} the authors proved that the finite embeddability has the following properties (for the relevant definitions, see \cite{rif12}):

\begin{prop}[\cite{fe}, Proposition 6]\label{trs} Let $A,B$ be sets of natural numbers. 
\begin{enumerate}
[leftmargin=*,label=(\roman*),align=left ]
  \item $A$ is maximal with respect to $\fe$ if and only if it is thick;
	\item if $A\fe B$ and $A$ is piecewise syndetic then $B$ is also piecewise syndetc;
	\item if $A\fe B$ and $A$ contains a $k$-term arithmetic progression then also $B$ contains a $k$-term arithmetic progression;
	\item if $A\fe B$ then the upper Banach densities satisfy $BD(A)\leq BD(B)$;
	\item if $A\fe B$ then $A-A\subseteq B-B$;
	\item if $A\fe B$ then $\bigcap\limits_{t\in G} (A-t)\fe \bigcap\limits_{t\in G}(B-t)$ for every finite $G\subseteq\mathbb{N}$.
\end{enumerate}
\end{prop}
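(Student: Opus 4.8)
The plan is to extract a single mechanism that drives all six items and then instantiate it. The working principle is this: if $A\fe B$, then every finite configuration sitting inside $A$ can be shifted rightward by some $k\in\N$ so as to sit inside $B$; hence any property of $A$ that is (a) witnessed by a finite subset of $A$ and (b) invariant under translation automatically passes to $B$. Several items are literally of this shape, and the rest are small variations on it.

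First I would dispatch (iii), (v) and (vi), which follow at once. For (iii), a $k$-term arithmetic progression contained in $A$ is a finite subset; its rightward translate inside $B$ is again a $k$-term arithmetic progression (with the same common difference). For (v), if $x=a_{1}-a_{2}$ with $a_{1},a_{2}\in A$, apply $\fe$ to the finite set $\{a_{1},a_{2}\}$ to get $k$ with $\{a_{1}+k,\,a_{2}+k\}\subseteq B$, so $x=(a_{1}+k)-(a_{2}+k)\in B-B$. For (vi), given a finite $F\subseteq A':=\bigcap_{t\in G}(A-t)$, the set $F+G=\{f+t\mid f\in F,\,t\in G\}$ is a finite subset of $A$; a rightward translate $F+G+k\subseteq B$ then yields $F+k\subseteq B-t$ for every $t\in G$, i.e. $F+k\subseteq\bigcap_{t\in G}(B-t)$, so $A'\fe\bigcap_{t\in G}(B-t)$.

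Next I would handle (iv) and (i). For (iv) I would use the interval form of upper Banach density: for every $\varepsilon>0$ and every sufficiently large $n$ there is an interval $I$ of length $n$ with $|A\cap I|\geq(BD(A)-\varepsilon)\,n$. Since $A\cap I$ is finite, a rightward translate $(A\cap I)+k$ lies inside $B$ and inside the interval $I+k$ of the same length $n$; hence $B$ has at least $(BD(A)-\varepsilon)\,n$ elements in some interval of length $n$, and letting $n\to\infty$ and then $\varepsilon\to0$ gives $BD(B)\geq BD(A)$. For (i), the ``if'' direction: if $A$ is thick then for any $B$ and any finite $F\subseteq B$, put $d=\max F-\min F$; thickness gives an interval $[m,m+d]\subseteq A$, and $(F-\min F)+m\subseteq[m,m+d]\subseteq A$, so $B\fe A$ for every $B$ and $A$ is $\fe$-maximal (in fact a maximum). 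For ``only if'', note that $A\fe\N$ always holds (take $k=0$); so if $A$ is maximal then $\N\fe A$, and applying this to the finite sets $\{0,1,\dots,n-1\}$ produces arbitrarily long intervals inside $A$, i.e. $A$ is thick.

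The hard part will be (ii). Here I would fix the characterization that is actually stable under passing to finite subsets and translating: $A$ is piecewise syndetic iff there is $b\in\N$ such that for every $n$ there is an interval of length $n$ on which consecutive elements of $A$ have distance $\leq b$ (equivalently, $\bigcup_{t\leq b}(A-t)$ is thick). Given such a $b$ and an $n$, take an interval $I$ of length $n+2b$ with this gap bound, let $F=A\cap I$; then $F$ is finite, its elements span an interval of length $\geq n$ with all gaps $\leq b$, and any rightward translate $F+k\subseteq B$ inherits exactly this, so $B$ is piecewise syndetic with the same constant $b$. The one genuine subtlety is the boundary effect — the distance from an endpoint of $I$ to the nearest element of $A\cap I$ can be as large as $b$ — which is precisely why the interval is padded by $2b$; one also has to be careful to invoke the ``arbitrarily long intervals with bounded gaps'' form of piecewise syndeticity rather than any formulation that refers to all of $\N$ at once. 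Everything else reduces to the finite-witness-plus-translation principle.
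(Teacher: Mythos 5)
Your proof is essentially correct, but there is nothing in the paper to compare it against: Proposition \ref{trs} is imported verbatim from \cite{fe} (their Proposition 6) and the present paper gives no proof of it, so you should be judged only on your own argument. Your unifying principle (finite witness plus translation invariance) is exactly the right mechanism, and items (iii), (v), (vi), (iv) and (ii) are handled correctly, including the genuine boundary issue in (ii) that you pad away with the extra $2b$. The one detail to repair is in the ``if'' direction of (i): the translate you exhibit is $F+k$ with $k=m-\min F$, and the definition of $\fe$ requires a \emph{rightward} translate, i.e. $k\in\N$, whereas the interval $[m,m+d]\subseteq A$ produced by thickness might start to the left of $\min F$. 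This is harmless --- a thick set contains arbitrarily long intervals, hence intervals $[m,m+d]$ with $m$ as large as you please, so simply choose $m\geq\min F$ --- but as written the step is incomplete and should say so explicitly.
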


We will use Proposition \ref{trs} to (re)prove some combinatorial properties of ultrafilters in $\overline{K(\bN,\oplus)}$ in Section \ref{extreme}. In this present section we want to study the existence of minimal elements with respect to $\fe$ in various subsets of $\mathcal{P}(\N)$, and a nice property of the ordering $\fe$ on the set of equivalence classes, namely that for every set $A$ there does not exist a set $B$ such that $[A]<_{fe} [B] <_{fe} [A+1]$. To prove this result we need the following lemma:

\begin{lem}\label{basico} For every $A,B\subseteq\N$ the following two properties hold:
\begin{enumerate}
[leftmargin=*,label=(\roman*),align=left ]
	\item\label{a1} if $B\nleq_{fe} A$ and $B\fe A+1$ then $B\subseteq A+1$;
	\item\label{a2} if $A\leq B$ and $A+1\nleq B$ then $A\subseteq B$.
\end{enumerate}\end{lem}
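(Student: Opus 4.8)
The plan is to prove both statements by contraposition: assuming the positive hypothesis together with the negation of the conclusion, I would construct a finite‑embeddability witness that contradicts the negative hypothesis. Two elementary facts get used throughout. First, finite embeddability is monotone in the embedded set: if $G'\subseteq G$ and $G+k\subseteq C$ then $G'+k\subseteq C$, so in order to verify $X\fe Y$ it suffices to check those finite subsets of $X$ that contain any one prescribed element. Second, for $H\subseteq\N$ with $\min H\geq 1$ one has $H\subseteq A+1\iff H-1\subseteq A$, and since $A+1\subseteq\{1,2,\dots\}$ the element $0$ never lies in $A+1$.

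For \ref{a1} I would suppose $B\fe A+1$ but $B\nsubseteq A+1$, fix a witness $b_0\in B\setminus(A+1)$, and then show $B\fe A$, contradicting $B\nleq_{fe}A$. Given an arbitrary finite $F\subseteq B$, replace it by $F\cup\{b_0\}$ so that we may assume $b_0\in F$. Since $B\fe A+1$ there is $k\in\N$ with $F+k\subseteq A+1$; then $b_0+k\in A+1$, so $k\neq 0$ (otherwise $b_0\in A+1$), i.e.\ $k\geq 1$. Hence $F+(k-1)=(F+k)-1\subseteq(A+1)-1=A$ with $k-1\in\N$, so $F$ has a rightward translate inside $A$. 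As $F$ was arbitrary, $B\fe A$.

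Part \ref{a2} I would handle by the mirror argument: suppose $A\fe B$ but $A\nsubseteq B$, fix $a_0\in A\setminus B$, and show $A+1\fe B$, contradicting $A+1\nleq_{fe}B$. For an arbitrary finite $F\subseteq A+1$, enlarge it so that $a_0+1\in F$; then $G:=F-1$ is a finite subset of $A$ containing $a_0$. Since $A\fe B$ there is $k\in\N$ with $G+k\subseteq B$; then $a_0+k\in B$, so again $k\neq 0$, i.e.\ $k\geq 1$. Therefore $F+(k-1)=(G+1)+(k-1)=G+k\subseteq B$ with $k-1\in\N$, so $F$ has a rightward translate inside $B$, and thus $A+1\fe B$.

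I do not expect a genuine obstacle here: the argument is essentially mechanical. The only point needing attention is the ``boundary'' translation amount $k=0$, and ruling it out is exactly the role of the chosen witnesses $b_0$ and $a_0$ — they force $k\geq 1$, which is precisely what lets us pass to the translate by $k-1$ while staying within $\N$.
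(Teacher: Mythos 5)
Your proof is correct and is essentially the paper's argument run in contrapositive form: both hinge on fixing a witness, enlarging an arbitrary finite set to contain it, and observing that a translate by $k\geq 1$ into $A+1$ yields a translate by $k-1$ into $A$, while $k=0$ forces containment in $A+1$. The only cosmetic difference is that you write out part \ref{a2} explicitly where the paper dismisses it as ``similar.''
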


\begin{proof} We prove only \ref{a1}, since \ref{a2} can be proved similarly. Let $F\subseteq B$ be a finite subset of $B$ such that $F+n\nsubseteq A$ for every $n\in\N$. In particular, for every finite $H\subseteq B$ such that $F\subseteq H$ and for every $n\in\N$ we have that $n+H\nsubseteq A$. But, by hypothesis, there exists $n\in\N$ such that $n+H\subseteq A+1$. If $n\geq 1$ we have a contradition, so it must be $n=0$, i.e $H\subseteq A+1$. Since this holds for every finite $H\subseteq B$ (with $F\subseteq H$) we deduce that $B\subseteq A+1$. \end{proof}
 
\begin{thm}\label{ledzeppelin} Let $A,B\subseteq\N$. If $A\fe B\fe A+1$ then $[A]=[B]$ or $[A+1]=[B]$. \end{thm}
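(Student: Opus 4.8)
The plan is a two–level case split powered entirely by Lemma~\ref{basico}. Write the hypothesis as the two relations $A \fe B$ and $B \fe A+1$.

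First I would dispose of the two easy cases. If $B \fe A$, then together with the hypothesis $A \fe B$ this gives $A \equiv_{fe} B$, i.e. $[A]=[B]$. Symmetrically, if $A+1 \fe B$, then with $B \fe A+1$ we get $[A+1]=[B]$. So from this point on we may assume that \emph{neither} reverse embeddability holds, i.e. $B \nleq_{fe} A$ and $A+1 \nleq_{fe} B$.

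Now I would feed each of these negations into one clause of Lemma~\ref{basico}. From $B \nleq_{fe} A$ together with the hypothesis $B \fe A+1$, clause~\ref{a1} yields $B \subseteq A+1$. From the hypothesis $A \fe B$ together with $A+1 \nleq_{fe} B$, clause~\ref{a2} (read with $\fe$ in place of the bare $\leq$ printed there) yields $A \subseteq B$. Hence $A \subseteq B \subseteq A+1$. The final observation is that the sandwich $A \subseteq A+1$ can only occur when $A=\emptyset$: if $A$ were nonempty with least element $a$, then $a \in A \subseteq A+1$ would force $a-1 \in A$, contradicting minimality. Therefore $A=\emptyset$, hence $B=\emptyset$ as well, and $[A]=[B]$ trivially.

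I do not anticipate a genuine obstacle here: the substance is entirely carried by Lemma~\ref{basico}, and the only points requiring care are matching each hypothesis to the correct clause and orientation of the lemma, and recognising that the "genuine middle" case should not be attacked by chasing finite sets but simply collapses because $A \subseteq B \subseteq A+1$ is degenerate.
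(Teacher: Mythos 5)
Your proof is correct and follows essentially the same route as the paper: assume neither $B\fe A$ nor $A+1\fe B$, apply the two clauses of Lemma~\ref{basico} to get $A\subseteq B\subseteq A+1$, and conclude. Your treatment is in fact slightly more careful than the paper's, which dismisses $A\subseteq A+1$ via $\min A$ without noting the degenerate case $A=\emptyset$ (where, as you observe, $B=\emptyset$ follows and the conclusion holds anyway).
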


\begin{proof} Let us suppose that $A+1\nleq_{fe} B\nleq_{fe} A$. Then, since $A\fe B\fe A+1$, by Lemma \ref{basico} we deduce that $A\subseteq B\subseteq A+1$, so $A\subseteq A+1$. This is absurd since $A\setminus (A+1)\supseteq\{\min A\}\neq\emptyset$. \end{proof}

We now turn the attention to the existence of minimal elements in various subsets of $\mathcal{P}(\N)$. Two immediate observations are that the empty set is the minimum in $(\mathcal{P}(\N),\fe)$ and that $\{0\}$ is the minimum in $(\mathcal{P}(\N)_{\geq 1}\equiv_{fe},\fe)$. Moreover, if we identify each natural number $n$ with the singleton $\{n\}$, it is immediate to see that $(\N,\leq)$ forms an initial segment of $(\mathcal{P}_{\geq 1}(\N),\fe)$ and that, more in general, the following easy result holds:

\begin{prop} A set $A$ is minimal in $(\mathcal{P}_{\geq n}(\N),\fe)$ if and only if $0\in A$ and $|A|=n$. \end{prop}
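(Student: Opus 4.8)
The plan is to prove the two directions separately. For the easy direction, suppose $A$ is minimal in $(\mathcal{P}_{\geq n}(\N),\fe)$. First I would show that any set $A$ with $|A|\geq n$ satisfies $A' \fe A$ where $A' = (A - \min A)$ restricted to its $n$ smallest elements; indeed $A'$ is a finite set of size exactly $n$ containing $0$, and every finite subset of $A'$ translates into $A$ (translate by $\min A$, which lands inside $A$; and since $|A'| = n$, the whole of $A'$ embeds). By minimality of $A$ we then get $A \fe A'$, and I will argue this forces $|A| = n$ (if $|A| > n$, pick a finite subset $F \subseteq A$ with $|F| = n+1$; no translate of $F$ can fit inside the $n$-element set $A'$, contradicting $A \fe A'$) and $0 \in A$ (if $\min A = m > 0$, then $A \fe A'$ together with $A' \fe A$ gives $A \equiv_{fe} A'$; but $A' \subseteq \N$ has $0 \in A'$, so $\{0\} \fe A'$, hence $\{0\} \fe A$, which by the definition of $\fe$ means some translate of $\{0\}$, i.e. some natural number, lies in $A$ — that is automatic — so instead I would argue directly: the singleton $\{\min A\} = \{m\}$ is not finitely embeddable into $A'$ unless $m \leq \max A' = $ (size-related bound), and more cleanly, strict minimality: the set $\{0,1,\dots,n-1\}$ has size $n$, lies in $\mathcal{P}_{\geq n}(\N)$, and $\{0,\dots,n-1\} \fe A$ always holds when $|A| \geq n$? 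No — that is false. Let me instead use: $A\fe A'$ and $A' \subseteq \{0,1,\dots\}$ finite with $\min A' = 0$, so by Proposition \ref{trs}(iv) or directly, $\min A$ must be $0$, because if $0 \notin A$ then $A-A \not\ni$ enough... cleanest is Lemma \ref{basico}: apply part (ii) with $B = A'$: we have $A' \fe A$ and, I claim, $A'+1 \nleq_{fe} A$, hence $A' \subseteq A$; since $0 \in A'$ this gives $0 \in A$.)

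For the converse, suppose $0 \in A$ and $|A| = n$. I must show $A$ is minimal: if $B \fe A$ with $B \in \mathcal{P}_{\geq n}(\N)$, then $B \equiv_{fe} A$. Since $|A| = n$, any finite subset of $B$ embeds into the $n$-element set $A$, so in particular every finite $F \subseteq B$ has $|F| \leq n$, whence $|B| \leq n$, so $|B| = n$ and $B$ is itself finite. Then $B \fe A$ with both finite of the same size forces (translating the whole of $B$) $B + k \subseteq A$ for some $k$, and by cardinality $B + k = A$, i.e. $B = A - k$. Since $0 \in A$ we get $-k \in B$, so $k \leq 0$; and since $0 \in B$ (once we show $\min B = 0$, which follows symmetrically: $A \fe B$? we do not yet know this) — here I would instead note $B = A - k$ with $B \subseteq \N$ forces $k \leq \min A = 0$, so $k \leq 0$; combined with $B \subseteq \N$ and $A = B + k$, $k \leq 0$, and $0 \in A$ gives $-k \in B \subseteq \N$ so $k \geq 0$; hence $k = 0$ and $B = A$, so certainly $[B] = [A]$.

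The main obstacle is the first direction, specifically extracting $0 \in A$ and $|A| = n$ from minimality; the clean tool is Lemma \ref{basico}\ref{a2}, applied with the finite ``normalization'' $A'$ of $A$ in the role of $B$ (noting $A' \fe A$, and that $A'+1 \nleq_{fe} A$ holds because $A'+1$ has no $0$ while any set finitely embeddable below $A$ via the only available translate must contain $0$ — or more directly because $A \fe A'$ would then give $A \fe A'+1$ contradicting that $A' \fe A$ is an equivalence yet $A'$ and $A'+1$ are inequivalent by Theorem \ref{ledzeppelin}-type reasoning, since $A' \subsetneq A'+1$ is false but $A' \ni 0 \notin A'+1$). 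Once $A' \subseteq A$ is established, $0 \in A' \subseteq A$ gives $0 \in A$; and $A \fe A'$ with $|A'| = n$ gives $|A| \leq n$, hence $|A| = n$. The rest is bookkeeping with translates of finite sets.
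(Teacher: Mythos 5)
The paper offers no written proof of this proposition (it is stated that ``the proof follows easily from the definitions''), so your plan can only be judged on its own terms. Most of it is sound: $A'\fe A$ for the normalization $A'$ (the $n$ smallest elements of $A-\min A$), minimality then yields $A\fe A'$, the cardinality argument giving $|A|=n$ is correct, and so is the whole converse direction, where the ``translate the entire finite set and compare cardinalities'' move correctly forces $B=A$.

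The genuine gap is the step extracting $0\in A$. Your committed argument applies Lemma~\ref{basico}\ref{a2} to $A'$ and $A$, which requires $A'+1\nleq_{fe} A$; but that premise is false precisely in the case you need to exclude. If $m=\min A\geq 1$, then $A'+1=(A-m)+1=A-(m-1)$, and translating rightward by $m-1\in\N$ carries all of $A'+1$ into $A$, so in fact $A'+1\fe A$. (When $m=0$ the premise does hold, but then the conclusion $A'\subseteq A$ is vacuous because $A'=A$.) The alternative justifications you sketch in parentheses --- e.g.\ that a set finitely embeddable in $A$ ``must contain $0$'' --- do not repair this. The fix is the same move you already use in the converse direction: once $|A|=n$ is known, $A$ itself is a finite subset of $A$, so $A\fe A'$ produces some $k\in\N$ with $A+k\subseteq A'$, and $|A+k|=|A'|=n$ forces $A+k=A'$, hence $\min A+k=\min A'=0$, so $k=0$ and $\min A=0$. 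With that substitution the proof is complete.
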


The proof follows easily from the definitions. Let us note that, in particular, the following facts follow:
\begin{enumerate}
[leftmargin=*,label=(\roman*),align=left ]
	\item for every natural number $m\geq n-1$ there are $\binom{m}{n-1}$ inequivalent minimal elements in $(\mathcal{P}_{\geq n}(\N),\leq_{fe})$ that are subsets of $\{0,...,m\}$;
	\item if $n\geq 2$ then $(\mathcal{P}_{\geq n}(\N),\leq_{fe})$ does not have a minimum element.

\end{enumerate}

If we consider only infinite subsets of $\N$ the situation is different: there are no minimal elements in $(\mathcal{P}_{\aleph_{0}}(\N)/\mathord\equiv_{fe},\fe)$, as we are now going to show.

\begin{defn} Let $A,B\subseteq\N$. We say that $A$ is strongly non f.e. in $B$ (notation: $A\nleq_{fe}^{S} B$) if for every set $C\subseteq A$ with $|C|= 2$ we have that $C\nleq_{fe}B$. If both $A\nleq_{fe}^{S} B$ and $B\nleq_{fe}^{S} A$ we say that $A,B$ are strongly mutually unembeddable (notation: $A\not\equiv_{S} B$). \end{defn}

Let us observe that, in the previous definition, we can equivalenty substitute the condition "$|C|=2$" with "$|C|\geq 2$".

\begin{prop}\label{incomparabili1} Let $X$ be an infinite subset of $\N$. Then there are $A,B\subseteq X$, $A,B$ infinite, such that $A\cap B=\emptyset$ and $A\not\equiv_{S} B$. \end{prop}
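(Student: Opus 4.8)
The plan is to deduce the proposition from a cleaner statement about difference sets. For $S\subseteq\N$ write $\Delta(S)=\{s-t\mid s,t\in S,\ s>t\}$. First I would record the elementary fact that if $C=\{c_1,c_2\}$ with $c_1<c_2$ and $C\fe B$, then some rightward translate $C+k$ is included in $B$, so $c_1+k,c_2+k\in B$ and hence $c_2-c_1\in\Delta(B)$. Consequently, if $A,B\subseteq\N$ satisfy $\Delta(A)\cap\Delta(B)=\emptyset$, then no two-element subset of $A$ is finitely embeddable in $B$ (its difference lies in $\Delta(A)$ and therefore not in $\Delta(B)$), so $A\nleq_{fe}^{S}B$, and by symmetry $B\nleq_{fe}^{S}A$; thus $A\not\equiv_{S}B$. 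It therefore suffices to produce infinite disjoint $A,B\subseteq X$ with $\Delta(A)\cap\Delta(B)=\emptyset$.

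Next I would build $A=\{a_n\mid n\in\N\}$ and $B=\{b_n\mid n\in\N\}$ by recursion, alternately adding one element to each set and maintaining the invariant that the finite difference sets of the pieces built so far are disjoint. Suppose at the current stage the pieces $A'=\{a_0,\dots,a_{n-1}\}\subseteq X$ and $B'=\{b_0,\dots,b_{m-1}\}\subseteq X$ have been chosen with $\Delta(A')\cap\Delta(B')=\emptyset$. To add $a_n$ I would choose $a_n\in X$ larger than every element chosen so far and avoiding the finite set $\bigcup_{i<n}\bigl(a_i+\Delta(B')\bigr)$; such a choice is possible since $X$ is infinite. Then $a_n-a_i\notin\Delta(B')$ for every $i<n$, so $\Delta(A'\cup\{a_n\})\cap\Delta(B')=\emptyset$ and the invariant is preserved; adding a new $b_m$ is entirely symmetric, avoiding $\bigcup_{j<m}\bigl(b_j+\Delta(A')\bigr)$ for the current $A'$.

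Finally, let $A$ and $B$ be the unions of all the $A$-pieces and $B$-pieces, respectively. Both are infinite, and since each newly chosen element strictly exceeds all previously chosen ones, $A\cap B=\emptyset$. To see $\Delta(A)\cap\Delta(B)=\emptyset$, suppose $d=a_j-a_i=b_\ell-b_k$ with $i<j$ and $k<\ell$, and consider whichever of $a_j,b_\ell$ entered the recursion later, say $a_j$. At the moment $a_j$ was chosen, $b_k$ and $b_\ell$ were already present, so $d=b_\ell-b_k$ belonged to the difference set of the then-current $B$-piece, which is exactly the set the choice of $a_j$ was made to avoid (relative to $a_i$) — a contradiction. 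Hence $\Delta(A)\cap\Delta(B)=\emptyset$, which by the first paragraph finishes the proof.

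The conceptual step is the reduction in the first paragraph: once one observes that finite embeddability of a pair forces a coincidence of differences, the rest is a standard diagonalization against finitely many forbidden values at each stage. The only point requiring genuine care is the last step, where one must argue that a difference common to the two infinite sets must already have been forbidden at the finite stage at which the later of its two witnesses appeared; this is routine bookkeeping, but it is precisely where a careless construction would break down.
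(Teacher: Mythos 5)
Your proof is correct and follows essentially the same strategy as the paper: both arguments reduce the claim to producing disjoint infinite subsets of $X$ whose difference sets $\Delta(A)$ and $\Delta(B)$ are disjoint (via the observation that a two-element set is finitely embeddable in $B$ exactly when its difference lies in $\Delta(B)$), and then build such sets by recursion. The only difference is in the bookkeeping: the paper forces the disjointness by making each new element exceed the sum of the two most recently chosen ones, so that any later difference is too large to coincide with an earlier one, whereas you explicitly avoid the finite set of forbidden values at each stage; both verifications are sound.
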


\begin{proof} To prove the thesis we construct $A,B\subseteq X$ such that, for any $C\subseteq A$, $D\subseteq B$ with $|C|=|D|=2$, we have $C\nleq_{fe} B$ and $D\nleq_{fe} B$.\par
Let $X=\{x_{n}\mid n\in\N\}$, with $x_{n}<x_{n+1}$ for every $n\in\N$. We set

\begin{equation*} a_{0}=x_{0}, b_{0}=x_{1} \end{equation*}

and, recursively, we set

\begin{equation*} a_{n+1}=\min\{x\in X\mid x>a_{n}+b_{n}+1\}, \ b_{n+1}=\min\{x\in X\mid x>b_{n}+a_{n+1}+1\}. \end{equation*}

Finally, we set $A=\{a_{n}\mid n\in\N\}$ and $B=\{b_{n}\mid n\in\N\}$. Clearly $A\cap B=\emptyset$, and both $A,B$ are infinite subsets of $X$. Now we let $a_{n_{1}}<a_{n_{2}}$ be any elements in $A$. Let us suppose that there are $b_{m_{1}}<b_{m_{2}}$ in $B$ with $a_{n_{2}}-a_{n_{1}}=b_{m_{2}}-b_{m_{1}}$ and let us assume that $b_{n_{2}}>a_{n_{2}}$ (if the converse hold, we can just exchange the roles of $a_{n_{1}},a_{n_{2}},b_{m_{1}},b_{m_{2}}$). By construction, since $b_{m_{2}}>a_{n_{2}}$, we have $b_{m_{2}}-b_{m_{1}}\geq a_{n_{2}}+1 >a_{n_{2}}$, while $a_{n_{2}}-a_{n_{1}}\leq a_{n_{2}}$. So $A\not\equiv_{S} B$.\end{proof}

Three corollaries follow immediatly by Proposition \ref{incomparabili1}:

\begin{cor}\label{nitrogeno} For every infinite set $X\subseteq\N$ there is an infinite set $A\subseteq X$ such that $X\nleq_{fe} A$. \end{cor}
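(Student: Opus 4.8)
The plan is to extract this corollary directly from Proposition~\ref{incomparabili1}. Given an infinite set $X\subseteq\N$, apply Proposition~\ref{incomparabili1} to obtain disjoint infinite subsets $A,B\subseteq X$ with $A\not\equiv_{S}B$. In particular $B\nleq_{fe}^{S}A$, which by definition means there is a two-element subset $C\subseteq B$ with $C\nleq_{fe}A$. Since $C\subseteq B\subseteq X$, this single pair $C$ already witnesses that $X\nleq_{fe}A$: indeed, if we had $X\fe A$, then every finite subset of $X$ — in particular $C$ — would have a rightward translate inside $A$, contradicting $C\nleq_{fe}A$. So $A$ is the desired infinite subset of $X$.

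The only point requiring a word of care is that the statement asks for $A\subseteq X$ with $X\nleq_{fe}A$, whereas Proposition~\ref{incomparabili1} produces a symmetric pair; we simply keep one of the two sets (say $A$) and use the failure of embeddability in the direction $B\nleq_{fe}^{S}A$ — equivalently, one could keep $B$ and use $A\nleq_{fe}^{S}B$. Either way, because $A,B\subseteq X$, whichever set we discard is itself a finite-or-infinite subset of $X$ containing a bad pair relative to the retained set, which is all that is needed. There is no real obstacle here: the content is entirely in Proposition~\ref{incomparabili1}, and this corollary is a one-line unwinding of the definitions of $\not\equiv_{S}$ and $\fe$.

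Concretely, the proof I would write is: \emph{By Proposition~\ref{incomparabili1} there are disjoint infinite sets $A,B\subseteq X$ with $A\not\equiv_{S}B$; in particular $B\nleq_{fe}^{S}A$, so some $C\subseteq B$ with $|C|=2$ satisfies $C\nleq_{fe}A$. Since $C$ is a finite subset of $X$ with no rightward translate contained in $A$, we conclude $X\nleq_{fe}A$.} This is short enough that it can be stated in two sentences, matching the "follow immediately" phrasing that precedes the corollary.
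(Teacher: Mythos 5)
Your proof is correct and follows essentially the same route as the paper: both extract the corollary immediately from Proposition~\ref{incomparabili1} via the strongly mutually unembeddable pair $A,B\subseteq X$. The only (cosmetic) difference is that the paper argues by contradiction that $X$ cannot be finitely embeddable in both $A$ and $B$ (using $A,B\fe X$ and the induced equivalence classes), whereas you unwind the definition of $B\nleq_{fe}^{S}A$ to exhibit a concrete two-element witness $C\subseteq B\subseteq X$ showing directly that $X\nleq_{fe}A$.
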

\begin{proof} Let $A,B$ be infinite subsets of $X$ such that $A\not\equiv_{S} B$. Then $X$ cannot be finitely embeddable in both $A$ and $B$ otherwise, since clearly $A,B\fe X$, we would have that $[A]=[X]=[B]$, which is absurd. \end{proof} 

\begin{cor}\label{popporoppo} For every infinite set $X\subseteq\N$ there is an infinite descending chain $X=X_{0}\supset X_{1}\supset X_{2}...$ in $\mathcal{P}_{\aleph_{0}}(\N)$ such that $X_{i+1}\nleq_{fe} X_{i}$ for every $i\in\mathbb{N}$. \end{cor}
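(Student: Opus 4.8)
The plan is to obtain the chain by a single recursion built on Corollary \ref{nitrogeno}. I would set $X_0=X$ and, assuming the infinite set $X_i$ has already been constructed, apply Corollary \ref{nitrogeno} to $X_i$ to get an infinite subset $X_{i+1}\subseteq X_i$ satisfying $X_i\nleq_{fe}X_{i+1}$. Performing this for every $i$ produces a sequence of infinite sets with $X_{i+1}\subseteq X_i$, which is the backbone of the desired descending chain.

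Before anything else I would record the two inequalities governing consecutive terms, since the whole point is to get their directions right. On the one hand, $X_{i+1}\subseteq X_i$ forces $X_{i+1}\fe X_i$ automatically: any finite $F\subseteq X_{i+1}$ already lies in $X_i$, so $F+0\subseteq X_i$. Consequently the inclusion direction can never give a failure of embeddability, and the genuine content at each stage is the reverse relation $X_i\nleq_{fe}X_{i+1}$ supplied by Corollary \ref{nitrogeno}. Putting the two together yields $[X_{i+1}]<_{fe}[X_i]$, so the chain strictly descends in $(\mathcal{P}_{\aleph_{0}}(\N)/\mathord\equiv_{fe},\fe)$; this strict descent, witnessed by the non-equivalence of consecutive terms, is exactly the infinite-descent phenomenon the corollary records.

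Next I would check that the containments are strict, so that $X_0\supset X_1\supset X_2\cdots$ really is a descending chain and does not stabilise. The quickest reason is that $X_i\nleq_{fe}X_{i+1}$ already excludes $X_{i+1}=X_i$, since every set is $\fe$ itself; hence $X_{i+1}\subsetneq X_i$. If one prefers a direct combinatorial reason, the witness produced inside Corollary \ref{nitrogeno} via Proposition \ref{incomparabili1} is one of two \emph{disjoint} infinite subsets $A,B\subseteq X_i$, so $X_{i+1}$ omits the infinitely many elements of the other set and the inclusion is proper. Either way one gets an infinite, genuinely descending chain of infinite sets.

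The main obstacle here is not a real difficulty but precisely this bookkeeping of directions: once one observes that the set-inclusion $X_{i+1}\subseteq X_i$ delivers $X_{i+1}\fe X_i$ for free, the statement collapses to iterating the one-step result of Corollary \ref{nitrogeno}, the only substantive ingredient being the reverse non-embeddability $X_i\nleq_{fe}X_{i+1}$. The sole remaining care is the routine verification that each inclusion is proper, which keeps the chain from terminating.
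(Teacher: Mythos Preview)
Your proposal is correct and matches the paper's approach exactly: the paper's entire proof is the one line ``The result follows immediately by Corollary \ref{nitrogeno},'' i.e., the recursion you describe. You have moreover caught a typo in the stated corollary: as you observe, $X_{i+1}\subset X_i$ forces $X_{i+1}\fe X_i$, so the printed condition $X_{i+1}\nleq_{fe}X_i$ must be read as $X_i\nleq_{fe}X_{i+1}$, which is exactly what Corollary \ref{nitrogeno} supplies and what you prove.
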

\begin{proof} The result follows immediatly by Corollary \ref{nitrogeno}.\end{proof}

\begin{cor}\label{gugugaga} There are no minimal elements in $(\mathcal{P}_{\aleph_{0}}(\N)/\mathord\equiv_{fe},\fe)$. \end{cor}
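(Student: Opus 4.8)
The plan is to deduce this corollary directly from Corollary \ref{popporoppo}, which already provides, for any infinite $X$, an infinite strictly descending chain $X = X_0 \supset X_1 \supset X_2 \supset \cdots$ with $X_{i+1} \nleq_{fe} X_i$ for every $i$. The only thing that needs to be observed is that such a chain produces, in the quotient $(\mathcal{P}_{\aleph_0}(\N)/\mathord\equiv_{fe}, \fe)$, an infinite strictly descending sequence of equivalence classes.

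So first I would fix an arbitrary infinite $A \subseteq \N$ and apply Corollary \ref{popporoppo} to $X = A$, obtaining the chain $A = X_0 \supset X_1 \supset \cdots$ with $X_{i+1} \nleq_{fe} X_i$. Each $X_i$ is infinite, hence $[X_i]$ is an element of $\mathcal{P}_{\aleph_0}(\N)/\mathord\equiv_{fe}$. Since $X_{i+1} \subseteq X_i$, we have $X_{i+1} \fe X_i$ (any finite subset of $X_{i+1}$ is already a subset of $X_i$, so the translate by $0$ works), hence $[X_{i+1}] \fe [X_i]$; and since $X_{i+1} \nleq_{fe} X_i$ we cannot have $[X_{i+1}] = [X_i]$. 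Therefore $[X_{i+1}] <_{fe} [X_i]$ for all $i$, so $[A] = [X_0] >_{fe} [X_1] >_{fe} [X_2] >_{fe} \cdots$ is an infinite strictly descending chain in the quotient starting at $[A]$. In particular $[A]$ is not minimal, and since $A$ was an arbitrary infinite set, no class in $(\mathcal{P}_{\aleph_0}(\N)/\mathord\equiv_{fe}, \fe)$ is minimal.

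There is essentially no obstacle here: the work has all been done in Proposition \ref{incomparabili1} and Corollary \ref{nitrogeno}. The only point requiring a word of care is the passage from sets to equivalence classes — one must check both that the chain does not collapse (handled by $X_{i+1} \nleq_{fe} X_i$) and that it genuinely descends (handled by $X_{i+1} \subseteq X_i \Rightarrow X_{i+1} \fe X_i$) — but both are immediate from the definitions. I would write the proof in two or three sentences, simply citing Corollary \ref{popporoppo} and spelling out why the resulting chain of classes is strictly descending.
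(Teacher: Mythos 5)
Your proposal is correct and follows exactly the paper's route: the paper also derives this corollary immediately from Corollary \ref{popporoppo}, and the details you spell out (that $X_{i+1}\subseteq X_i$ gives $[X_{i+1}]\fe [X_i]$ while $X_{i+1}\nleq_{fe}X_i$ prevents the classes from coinciding) are precisely the ones the paper leaves implicit.
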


\begin{proof} The result follows immediatly by Corollary \ref{popporoppo}. \end{proof}

\section{Properties of $(\bN,\fe)$}\label{extreme}

In this section we want to prove some basic properties of $(\bN,\fe)$, in particular the generalization of Theorem \ref{ledzeppelin} to ultrafilters, and to characterize the maximal ultrafilters with respect to $\fe$. We fix some notations: we will denote by $\equiv_{fe}$ the equivalence relation such that, for every $\U,\V$ ultrafilters on $\N$, 

\begin{equation*} \U\equiv_{fe} \V\Leftrightarrow \U\fe \V \wedge \U\fe \V \end{equation*}

and, for every ultrafilter $\U$, we will denote by $[\U]$ its equivalence class. Finally we will denote by $\leq_{fe}$ the ordering induced on the space of equivalence classes defined by setting, for every $\U,\V\in\bN$, 

\begin{equation*} [\U]\fe [\V]\Leftrightarrow \U\fe \V.\end{equation*}

\subsection{Some basic properties of $(\bN,\fe)$}

The first result that we prove is that Theorem \ref{ledzeppelin} can be generalized to ultrafilters:

\begin{thm} For every $\U,\V\in\bN$ if $\U\fe \V\fe \U\oplus 1$ then $[\U]=[\V]$ or $[\U\oplus 1]=[\V]$. \end{thm}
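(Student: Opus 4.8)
The approach is to mimic the proof of Theorem \ref{ledzeppelin}, replacing the elementary set-theoretic lemma \ref{basico} by an ultrafilter analogue. First I would establish the ultrafilter version of Lemma \ref{basico}\ref{a1}: if $\V \nleq_{fe} \U$ and $\V \fe \U\oplus 1$, then $\V \subseteq \U\oplus 1$ in the sense that every $B \in \V$ satisfies $B \in \U\oplus 1$ — equivalently, $\U \oplus 1 \subseteq \V$ as filters on $\N$, which for ultrafilters forces $\V = \U\oplus 1$, i.e. $[\V] = [\U\oplus 1]$. Concretely, one unwinds $\V \nleq_{fe} \U$: there is some $B_{0} \in \V$ such that for no $A \in \U$ do we have $A \fe B_{0}$. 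The plan is to transfer this to finite subsets: for every $B \in \V$ with $B \subseteq B_{0}$ (such $B$ are cofinal in $\V$ since $B_{0} \in \V$), and for every finite $F \subseteq B$, I claim no rightward translate $F + n$ with $n \geq 1$ lies in $\U$ while simultaneously $F \subseteq \U\oplus 1$ is forced by $\V \fe \U \oplus 1$; comparing the two yields $F \subseteq A+1$ for $A \in \U$, and letting $F$ exhaust $B$ gives $B \in \U\oplus 1$.

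The key technical point is the translation between the ``set'' statement $A \fe B$ and the ``ultrafilter'' statement, using the characterization (from \cite{fe}) that $\U \fe \V$ iff for every $B \in \V$ there is $A \in \U$ with $A \fe B$, together with the fact that $n + F \subseteq A$ for $A \in \U$ is equivalent to $F \subseteq A - n \in \U$ when we think of membership. So ``$F + n \in \U$ for some $n \geq 1$'' should be read as ``$F \subseteq A$ for some $A \in \U \oplus (-n)$'' — but since we only have $\oplus 1$ available and want to avoid subtraction, the cleaner route is to argue purely at the level of witnesses $A \in \U$ as in Lemma \ref{basico}: fix the bad $B_{0} \in \V$, fix finite $F \subseteq B_{0}$ witnessing $F + n \nsubseteq A$ for all $n$ and all $A \in \U$ (here I would need that the failure of $A \fe B_{0}$ for \emph{all} $A\in\U$ gives, by an intersection/pigeonhole argument over the finitely many translates relevant to a fixed finite set, a single finite $F \subseteq B_{0}$ that is bad for all $A \in \U$ — this is the step requiring care, since \emph{a priori} the bad finite set depends on $A$). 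Then for $H \subseteq B_{0}$ finite with $F \subseteq H$: from $\V \fe \U \oplus 1$ applied to a suitable member of $\V$ containing $H$, get $A \in \U$ and $n$ with $n + H \subseteq A + 1$, i.e. $(n-1) + H \subseteq A$ if $n \geq 1$, contradicting badness of $F \subseteq H$ unless $n = 0$, whence $H \subseteq A + 1 \in \U \oplus 1$. Since such $H$ generate $\V$ restricted below $B_{0}$, conclude $\V \subseteq \U \oplus 1$, hence $[\V] = [\U \oplus 1]$.

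Symmetrically, the analogue of Lemma \ref{basico}\ref{a2} gives: if $\U \fe \V$ and $\U \oplus 1 \nleq_{fe} \V$, then $[\U] = [\V]$ (the same argument with the roles adjusted, using that $\min$ is replaced by the ``shift by $1$'' having no fixed point at the level of ultrafilters — one must check $\U \neq \U \oplus 1$, or rather that one cannot have $\U \subseteq \U \oplus 1$ properly, which holds because $A \mapsto A+1$ has no infinite $\subseteq$-chain fixed point: $\bigcap_n (A + n) = \emptyset$ for $A \neq \N$, and $\N \notin \U \oplus 1$ is impossible, so actually $\U = \U \oplus 1$ would force... here the cleanest fact is simply that $\U \oplus 1 \subsetneq \U$ as filters is impossible by symmetry of the ultrafilter property, so equality of the generated filters gives $[\U]=[\U\oplus1]$).

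With both analogues in hand the theorem is immediate exactly as Theorem \ref{ledzeppelin}: assume $\U \oplus 1 \nleq_{fe} \V$ and $\V \nleq_{fe} \U$; the first analogue gives $[\V] = [\U \oplus 1]$, contradicting $\U \oplus 1 \nleq_{fe} \V$ (since $[\V]=[\U\oplus 1]$ implies $\U \oplus 1 \fe \V$). Hence either $\V \fe \U$, giving with $\U \fe \V$ the conclusion $[\U] = [\V]$, or $\U \oplus 1 \fe \V$, giving with $\V \fe \U \oplus 1$ the conclusion $[\V] = [\U \oplus 1]$.

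\textbf{Main obstacle.} The delicate step is extracting, from ``$A \fe B_{0}$ fails for every $A \in \U$'', a \emph{single} finite $F \subseteq B_{0}$ that is not translatable into any $A \in \U$; the failure of $A \fe B_{0}$ only gives an $F_{A}$ depending on $A$. I expect this is handled by noting that for a fixed finite $F$, the set of $A$ for which some translate $F + n$ ($n$ ranging over a finite window determined by $\max F$ and $\min B_0, \max B_0$ — actually $n$ can be unbounded, so one restricts attention to $A \cap [0, N]$ for $N$ large) lands in $A$ is a Boolean combination of the shifted sets, so membership in $\U$ of ``$A$ admits $F$'' is decidable from $\U$; and a compactness/König's-lemma argument on the tree of finite subsets of $B_0$ then produces the uniform bad $F$ — alternatively, and more in the spirit of \cite{fe}, one invokes the already-proven characterization that $\U \nleq_{fe} \V$ is equivalent to the existence of $B \in \V$ with $B \nleq_{fe} A$ for all $A\in\U$ \emph{and} directly uses Lemma \ref{basico}\ref{a1} itself with $A$ replaced by such an $A$, so no new compactness is needed at all. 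In that case the proof is essentially a bookkeeping exercise over the quantifiers ``for each $B \in \V$ there is $A \in \U$'', and the only real content is the monotonicity observation that the witnessing finite sets can be taken inside a fixed $B_{0} \in \V$.
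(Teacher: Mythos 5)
Your high-level plan is exactly the paper's: the whole theorem reduces to the single "ultrafilter analogue of Lemma \ref{basico}\ref{a1}" that you state at the outset (if $\V\nleq_{fe}\U$ and $\V\fe\U\oplus 1$ then $\U\oplus 1\subseteq\V$, hence $\V=\U\oplus 1$), and your final paragraph assembles the dichotomy from it correctly. The genuine gap is in how you try to prove that analogue: you unwind the negated relation backwards. By the definition of $\fe$ on ultrafilters, $\V\fe\U$ means that for each $B\in\U$ there is $K\in\V$ with $K\fe B$; so $\V\nleq_{fe}\U$ produces a witness $B_{0}\in\U$ (not in $\V$) such that $K\nleq_{fe}B_{0}$ for \emph{every} $K\in\V$ — not "some $B_{0}\in\V$ such that $A\fe B_{0}$ fails for every $A\in\U$". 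The same reversal reappears in your "Main obstacle" paragraph ("$\U\nleq_{fe}\V$ is equivalent to the existence of $B\in\V$ with $B\nleq_{fe}A$ for all $A\in\U$"; it should be $A\nleq_{fe}B$). Because of this, the obstacle you spend most of your effort on — extracting one finite set bad for all $A\in\U$ simultaneously, via compactness or K\"onig's lemma — is an artifact of the wrong unwinding; no such uniformization is needed, and the detailed argument as you wrote it does not go through.

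With the correct unwinding the proof closes in a few lines by applying the set-level Lemma \ref{basico}\ref{a1} to one well-chosen pair of sets, which is what the paper does. From $\V\nleq_{fe}\U$ fix $B\in\U$ with $K\nleq_{fe}B$ for all $K\in\V$. For an arbitrary $A\in\U$ we have $(A\cap B)+1\in\U\oplus 1$, so $\V\fe\U\oplus 1$ gives some $C\in\V$ with $C\fe (A\cap B)+1$; on the other hand $C\nleq_{fe}A\cap B$, since $A\cap B\subseteq B$ and $C\fe A\cap B$ would give $C\fe B$. Lemma \ref{basico}\ref{a1} then yields $C\subseteq (A\cap B)+1$, hence $(A\cap B)+1\in\V$ and so $A+1\in\V$; as $A\in\U$ was arbitrary, $\U\oplus 1\subseteq\V$, i.e. $\V=\U\oplus 1$, and the contradiction with $\U\oplus 1\nleq_{fe}\V$ (via the specific $A\in\U$ with $A+1\notin\V$) finishes the proof. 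Your closing remark that one should "directly use Lemma \ref{basico}\ref{a1} itself... so no new compactness is needed" points at exactly this, but you leave it as a conjecture and the characterization you quote there is again reversed; as submitted, the key step is not established.
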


\begin{proof} Let us suppose that $\U\oplus 1\nleq_{fe} \V\nleq_{fe} \U$. In particular, $\U\oplus 1\neq \V$, so there exists $A\in\U$ such that $A+1\notin \V$. Since $\V\nleq_{fe} \U$ there exists $B\in\U$ such that $K\nleq_{fe} B$ for every $K\in\V$. In particular, $K\nleq A\cap B$ for every $K\in\V$.\par
Moreover, since $(A\cap B)+1\in\U\oplus 1$ we derive that there exists $C\in\V$ such that $C\fe (A\cap B)+1$. So we have that

\begin{equation*} C\nleq_{fe} (A\cap B) \ \mbox{and} \ C\fe (A\cap B)+1; \end{equation*}

by Lemma \ref{basico} we conclude that $C\subseteq (A\cap B)+1$. But $C\in\V$, so $(A\cap B)+1\in\V$ and, since $(A\cap B)+1\subseteq A+1$, this entails that $A+1\in\V$, which is absurd. \end{proof}

Another result that we want to prove is that $(\bN,\fe)$ is not a total preorder:

\begin{prop} There are nonprincipal ultrafilters $\U,\V$ such that $\U$ is not finitely embeddable in $\V$ and $\V$ is not finitely embeddable in $\U$. \end{prop}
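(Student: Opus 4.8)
The plan is to lift the construction from Proposition~\ref{incomparabili1} to the level of ultrafilters, using the strong mutual unembeddability already established there. First I would take the infinite sets $A,B\subseteq\N$ produced by Proposition~\ref{incomparabili1}, so that $A\cap B=\emptyset$ and $A\not\equiv_S B$; in particular no two-element subset of $A$ is finitely embeddable in $B$, and no two-element subset of $B$ is finitely embeddable in $A$. The key point is that this obstruction survives when we pass to \emph{any} nonprincipal ultrafilters $\U\ni A$ and $\V\ni B$: such ultrafilters exist because $A$ and $B$ are infinite, and $\U\neq\V$ is automatic since $A\cap B=\emptyset$.

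Next I would verify that $\U\nleq_{fe}\V$ and $\V\nleq_{fe}\U$. For the first, I use $B\in\V$ as the witnessing set in Definition~2: I must show that no set in $\U$ is finitely embeddable in $B$. If $A'\in\U$ then $A'\cap A\in\U$ is infinite, in particular it contains a two-element subset $C\subseteq A$; since $A'\cap A\fe B$ would force $C\fe B$ (finite embeddability is inherited by subsets on the left), and $C\nleq_{fe}B$ by $A\not\equiv_S B$, we get $A'\cap A\nleq_{fe}B$, hence $A'\nleq_{fe}B$ (again because $A'\cap A\subseteq A'$ and a superset that is f.e.\ in $B$ would make the subset f.e.\ in $B$ — wait, the inheritance goes the other way). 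Let me restate this more carefully: finite embeddability passes to subsets on the left, i.e.\ if $C\subseteq A'$ and $A'\fe B$ then $C\fe B$. So from $A'\in\U$, pick any two-element $C\subseteq A'\cap A$; if $A'\fe B$ then $C\fe B$, contradicting $C\nleq_{fe}B$. Hence no $A'\in\U$ is f.e.\ in $B\in\V$, so $\U\nleq_{fe}\V$. The argument for $\V\nleq_{fe}\U$ is symmetric, using $A\in\U$ as the witnessing set and the fact that $A\not\equiv_S B$ also gives $B\nleq_{fe}^S A$.

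The main obstacle, such as it is, is purely bookkeeping: making sure the direction of the implication "$C\subseteq A'$ and $A'\fe B$ imply $C\fe B$" is used correctly, and double-checking that Definition~2 quantifies over sets in $\V$ and asks for \emph{some} set in $\U$ — so to \emph{refute} $\U\fe\V$ one exhibits a single bad set in $\V$ (here $B$) for which \emph{every} set in $\U$ fails to be f.e.\ in it. Once the direction of subset-inheritance is pinned down, the proof is just the two symmetric paragraphs above. I would close by remarking that this shows $[\U]$ and $[\V]$ are incomparable in $(\bN/\mathord\equiv_{fe},\fe)$, so the preorder is genuinely partial.
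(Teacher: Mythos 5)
Your proof is correct and follows essentially the same route as the paper: both take the strongly mutually unembeddable sets $A,B$ from Proposition~\ref{incomparabili1}, put them into nonprincipal ultrafilters $\U,\V$, and use the fact that any $C\in\U$ witnessing $C\fe B$ would force an infinite (hence at least two-element) subset of $A$ to be finitely embeddable in $B$, contradicting $A\not\equiv_{S}B$. The directional worry you raise about subset-inheritance is resolved correctly ($C\subseteq A'$ and $A'\fe B$ imply $C\fe B$), and this is exactly the step the paper uses via $A\cap C\subseteq C$.
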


\begin{proof} Let $A,B$ be strongly mutually unembeddable infinite sets (which existence is a consequence of Proposition \ref{incomparabili1}). Let $\U,\V$ be nonprincipal ultrafilters such that $A\in\U, B\in\V$ and let us suppose that $\U\fe\V$. Let $C\in \U$ be such that $C\fe B$. Since $C\in\U$, $A\cap C$ is in $\U$ and it is infinite (since $\U$ is nonprincipal). So we have that

\begin{itemize}
	\item $A\cap C\fe B$, since $A\cap C\subseteq C$;
	\item $A\cap C\nleq_{fe} B$, since $A\not\equiv_{S} B$.
\end{itemize}

This is absurd, so $\U$ is not finitely embeddable in $\V$. In the same way we can prove that $\V$ is not finitely embeddable in $\U$.\end{proof}

It is easy to show that, if we identity each natural number $n$ with the principal ultrafilter $\U_{n}=\{A\in\mathcal{P}(\N)\mid n\in A\}$, then $(\N,\leq)$ is an initial segment in $(\bN,\fe)$. In particular, $\U_{0}$ is the minimum element in $\bN$. One may wonder if there is a minimum element in $(\bN\setminus\N,\fe),$ and the answer is no. In the following proposition, by $\Theta_{X}$ we mean the clopen set

\begin{equation*} \Theta_{X}=\{\U\in\bN\mid X\in\U\}. \end{equation*}

\begin{prop} For every infinite set $X\subseteq\N$ there is not a minimum in $((\Theta_{X}\setminus\N)/\mathord\equiv_{fe},\fe)$. \end{prop}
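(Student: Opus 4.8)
The plan is a proof by contradiction. Assume $[\U]$ is the minimum of $((\Theta_X\setminus\N)/\mathord\equiv_{fe},\fe)$, represented by some nonprincipal ultrafilter $\U$ with $X\in\U$. The crucial first move is to turn minimality into a statement about \emph{all} infinite subsets of $X$: for every infinite $D\subseteq X$ there is $A\in\U$ with $A\fe D$. Indeed, given such a $D$, choose any nonprincipal ultrafilter $\V$ with $D\in\V$ (possible since $D$ is infinite); then $X\in\V$ because $D\subseteq X$, so $\V\in\Theta_X\setminus\N$, and minimality of $[\U]$ forces $\U\fe\V$; unwinding the definition of finite embeddability between ultrafilters at the set $D\in\V$ yields the desired $A\in\U$ with $A\fe D$.

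Now apply Proposition \ref{incomparabili1} to $X$ to obtain disjoint infinite $D_1,D_2\subseteq X$ with $D_1\not\equiv_{S} D_2$. By the previous step pick $A_1,A_2\in\U$ with $A_1\fe D_1$ and $A_2\fe D_2$, and set $A=A_1\cap A_2$, which belongs to $\U$ and hence is infinite. Fix $a_1<a_2$ in $A$. Since $\{a_1,a_2\}$ is a finite subset of both $A_1$ and $A_2$, there are $k_1,k_2\in\N$ with $\{a_1+k_1,a_2+k_1\}\subseteq D_1$ and $\{a_1+k_2,a_2+k_2\}\subseteq D_2$; interchanging the indices $1$ and $2$ if necessary (legitimate because $D_1\not\equiv_{S} D_2$ is symmetric) we may assume $k_1\le k_2$. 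Then $\{a_1+k_1,a_2+k_1\}$ is a two-element subset of $D_1$ whose rightward translate by $k_2-k_1\ge 0$ is $\{a_1+k_2,a_2+k_2\}\subseteq D_2$, so $\{a_1+k_1,a_2+k_1\}\fe D_2$. This contradicts $D_1\nleq_{fe}^{S} D_2$, which says precisely that no two-element subset of $D_1$ is finitely embeddable in $D_2$; hence no minimum exists.

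I expect the only genuinely substantive point to be the reformulation in the first paragraph: minimality of $[\U]$ must be exploited not against the handful of ultrafilters already in play but against a nonprincipal ultrafilter placed through an arbitrary infinite subset of $X$, which is what forces $\U$ to finitely embed, via one of its members, into every such subset. Once this is in hand, the contradiction is the same short difference-counting argument used earlier to show that $(\bN,\fe)$ is not a total preorder, now applied to the strongly mutually unembeddable pair $D_1,D_2$ supplied by Proposition \ref{incomparabili1}.
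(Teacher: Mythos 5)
Your proof is correct and follows essentially the same route as the paper's: minimality of $[\U]$ is applied to nonprincipal ultrafilters containing each of the two strongly mutually unembeddable infinite subsets of $X$ supplied by Proposition \ref{incomparabili1}, the resulting two members of $\U$ are intersected, and a two-element subset of the intersection yields the contradiction. Your explicit preliminary reformulation (``$\U$ finitely embeds into every infinite $D\subseteq X$ via some member'') and your phrasing of the final contradiction in terms of a translate rather than equal differences are only cosmetic differences.
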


\begin{proof} Let us suppose that such a minimum $M$ exists, and let $\U\in\Theta_{X}$ be such that $M=[\U]$. Let $A,B\subseteq X$ be mutually unembeddable subsets of $X$ and let $\V_{1},\V_{2}$ be nonprincipal ultrafilters such that $A\in \V_{1}$ and $B\in \V_{2}$ (in particular, $\V_{1},\V_{2}\in\Theta_{X}$). Since, by hypothesis, $[\U]$ is the minumum in $((\Theta_{X}\setminus\N)/\mathord\equiv_{fe},\fe)$, there are $C_{1},C_{2}\in \U$ such that $C_{1}\fe A$ and $C_{2}\fe B$. Let us consider $C_{1}\cap C_{2}\in\U$. By construction, $C_{1}\cap C_{2}$ is finitely embeddable in $A$ and in $B$. But this is absurd: in fact, let $c_{1}<c_{2}$ be any two elements in $C_{1}\cap C_{2}$. Then there are $n,m$ such that $n+\{c_{1},c_{2}\}=\{a_{1},a_{2}\}\subset A$ and $m+\{c_{1},c_{2}\}=\{b_{1},b_{2}\}\subset B$, and this cannot happen, because in this case we would have $b_{2}-b_{1}=c_{2}-c_{1}=a_{2}-a_{1}$, while $A\not\equiv_{S} B$. \end{proof}

In particular, by taking $X=\N$, we prove that:

\begin{cor} There is not a minimum in $((\bN\setminus\N)/\mathord\equiv_{fe},\fe)$.\end{cor}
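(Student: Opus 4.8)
The plan is to derive this directly from the preceding proposition by specializing to $X = \N$. The preceding proposition asserts that for every infinite set $X \subseteq \N$ there is no minimum in $((\Theta_X \setminus \N)/\mathord\equiv_{fe}, \fe)$. Since $\N$ is itself an infinite subset of $\N$, the proposition applies with $X = \N$.

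The only thing to check is that $\Theta_\N \setminus \N = \bN \setminus \N$, i.e. that $\Theta_\N = \bN$. This is immediate: $\Theta_\N = \{\U \in \bN \mid \N \in \U\}$, and every ultrafilter on $\N$ contains $\N$ itself (since $\N$ is the whole underlying set). Hence $\Theta_\N = \bN$, so $(\Theta_\N \setminus \N)/\mathord\equiv_{fe} = (\bN \setminus \N)/\mathord\equiv_{fe}$, and the nonexistence of a minimum transfers verbatim. So the proof is a single sentence.

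There is no real obstacle here: the corollary is a pure specialization, and all the work was already done in the proposition (via the strongly mutually unembeddable infinite subsets of $\N$ furnished by Proposition \ref{incomparabili1}). If anything, the only subtlety worth a word is the trivial observation that $\N \in \U$ for every ultrafilter $\U$, which I would mention only to justify the equality $\Theta_\N = \bN$.

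\begin{proof} It suffices to apply the previous proposition with $X=\N$. Indeed, $\N$ is an infinite subset of $\N$, and $\Theta_{\N}=\{\U\in\bN\mid\N\in\U\}=\bN$, since $\N$ belongs to every ultrafilter on $\N$. Hence $(\Theta_{\N}\setminus\N)/\mathord\equiv_{fe}=(\bN\setminus\N)/\mathord\equiv_{fe}$, and the previous proposition gives that this preorder has no minimum. \end{proof}
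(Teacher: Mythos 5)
Your proof is correct and matches the paper exactly: the paper also obtains this corollary by specializing the preceding proposition to $X=\N$, where $\Theta_{\N}=\bN$ since every ultrafilter on $\N$ contains $\N$.
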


\subsection{Maximal Ultrafilters}

To study maximal ultrafilters in $(\bN,\fe)$ we need to recall three results that have been proved in \cite{fe}:

\begin{thm}[\cite{fe}, Theorem 10]\label{fondamentali} Let $\U,\V$ be ultrafilters on $\N$. Then $\U\fe\V$ if and only if $\V\in\overline{\{\U\oplus\W\mid \W\in\bN\}}$. \end{thm}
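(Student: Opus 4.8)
The plan is to reduce the statement to the single combinatorial claim:

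\emph{For every $B\subseteq\N$ there exists $\W\in\bN$ with $B\in\U\oplus\W$ if and only if there exists $A\in\U$ with $A\fe B$.}

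Granting this, the theorem follows by unwinding definitions. By Definition 2, $\U\fe\V$ means that for every $B\in\V$ there is $A\in\U$ with $A\fe B$; by the claim this is equivalent to saying that for every $B\in\V$ there is $\W\in\bN$ with $B\in\U\oplus\W$, i.e.\ $\Theta_B\cap\{\U\oplus\W\mid\W\in\bN\}\neq\emptyset$. Since the clopen sets $\Theta_B$ with $B\in\V$ form a neighbourhood base of $\V$ in $\bN$, this says exactly that every neighbourhood of $\V$ meets $\{\U\oplus\W\mid\W\in\bN\}$, that is, $\V\in\overline{\{\U\oplus\W\mid\W\in\bN\}}$.

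So it remains to prove the claim. Recall that, with the convention $C-n=\{m\in\N\mid n+m\in C\}$, one has $C\in\U\oplus\W$ iff $\{n\in\N\mid C-n\in\W\}\in\U$. The key observation, which is the only real content of the argument, is that for finite $F\subseteq\N$
\[ \bigcap_{a\in F}(B-a)=\{k\in\N\mid F+k\subseteq B\}, \]
so that the family $\{B-a\mid a\in A\}$ has the finite intersection property precisely when every finite $F\subseteq A$ admits a rightward translate $F+k\subseteq B$, i.e.\ precisely when $A\fe B$. For the direction ``$\Leftarrow$'' of the claim, suppose $A\in\U$ and $A\fe B$: then $\{B-a\mid a\in A\}$ has the finite intersection property, hence extends to some $\W\in\bN$, and $A\subseteq\{n\in\N\mid B-n\in\W\}\in\U$, so $B\in\U\oplus\W$. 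For ``$\Rightarrow$'', suppose $B\in\U\oplus\W$ and put $A=\{n\in\N\mid B-n\in\W\}\in\U$; given finite $F\subseteq A$ we have $\bigcap_{a\in F}(B-a)\in\W$, hence nonempty, and any $k$ in it satisfies $F+k\subseteq B$, so $A\fe B$.

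There is no deep obstacle here; the proof is a clean unwinding of the definitions of $\fe$, of $\oplus$, and of the Stone--\v{C}ech topology. The one point requiring care is recognising that finite embeddability $A\fe B$ is exactly the combinatorial condition guaranteeing that $\{B-a\mid a\in A\}$ generates an ultrafilter, which is what allows passing back and forth between the order-theoretic statement $\U\fe\V$ and the topological/algebraic statement $\V\in\overline{\{\U\oplus\W\mid\W\in\bN\}}$.
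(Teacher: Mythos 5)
Your proof is correct. Note that the paper itself gives no proof of this statement --- it is quoted verbatim from \cite{fe} (Theorem 10) --- so there is nothing internal to compare against; your argument is the standard one, reducing the topological statement to the combinatorial claim that $B\in\U\oplus\W$ for some $\W$ iff some $A\in\U$ satisfies $A\fe B$, via the observation that $A\fe B$ is exactly the finite intersection property of $\{B-a\mid a\in A\}$. Both directions of that claim and the passage through the neighbourhood base $\{\Theta_B\mid B\in\V\}$ check out.
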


\begin{cor}[\cite{fe}, Corollary 12]\label{updir} The ordering $\fe$ on ultrafilters on $\N$ is upward directed.\end{cor}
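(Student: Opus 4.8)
The statement to establish is Corollary 12 of \cite{fe}: the preorder $\fe$ on $\bN$ is upward directed, i.e.\ for any two ultrafilters $\U,\V$ there is some $\W$ with $\U\fe\W$ and $\V\fe\W$. The natural route is to invoke Theorem \ref{fondamentali}, which translates $\fe$ into a topological-algebraic statement: $\U\fe\W$ exactly when $\W$ lies in the closure $\overline{\{\U\oplus\X\mid \X\in\bN\}}$. Since $\{\U\oplus\X\mid\X\in\bN\}$ is precisely the left-translate of $\bN$ under $\U$, i.e.\ the image of the continuous map $\X\mapsto\U\oplus\X$ applied to the compact space $\bN$, this set is already closed; but closure is not needed. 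The point is simply that any ultrafilter of the form $\U\oplus\X$ satisfies $\U\fe\U\oplus\X$.

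\smallskip
The key step is then to find a single $\W$ simultaneously dominating $\U$ and $\V$. I would take $\W=\U\oplus\V$ (or equally $\V\oplus\U$; since $\oplus$ is not commutative one should just pick one). Then $\U\fe\U\oplus\V=\W$ follows directly from Theorem \ref{fondamentali}, taking $\X=\V$ and noting $\W$ trivially lies in $\overline{\{\U\oplus\X\mid\X\in\bN\}}$. For the other half, $\V\fe\W$, I would use associativity of $\oplus$ on $\bN$: writing $\W=\U\oplus\V$, I want to exhibit $\W$ as being in the closure of $\{\V\oplus\X\mid\X\in\bN\}$. Here the cleanest argument is to instead set $\W=\V\oplus\U\oplus(\text{something})$ — but associativity alone does not rewrite $\U\oplus\V$ as $\V\oplus(\cdots)$. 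So the correct choice is to be asymmetric differently: pick $\W$ so that \emph{both} $\U$ and $\V$ sit on the left. Concretely, I would first note that by Theorem \ref{fondamentali} it suffices to produce $\W$ with $\W\in\overline{\{\U\oplus\X\}}\cap\overline{\{\V\oplus\X\}}$; and I would look for $\W$ of the form $\Z\oplus\U$ for a suitable $\Z$, because then by associativity $\W=\Z\oplus\U$ while one wants also $\W\fe$-above $\V$.

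\smallskip
The clean resolution, which I expect the authors use, is: take any $\W$ with $\U\oplus\V\fe\W$ is not needed — rather, observe that $\fe$ is transitive (stated in the excerpt as a known fact that $(\bN,\fe)$ is a preorder), and that $\U\fe\U\oplus\V$ and $\V\fe\U\oplus\V$ \emph{both} hold. The first is immediate from Theorem \ref{fondamentali}. For the second, use that $\U\oplus\V$ is a limit of the maps $n\mapsto n\oplus\V$ along $\U$, hence $\U\oplus\V\in\overline{\{n\oplus\V\mid n\in\N\}}\subseteq\overline{\{\X\oplus\V\mid\X\in\bN\}}$; but Theorem \ref{fondamentali} is phrased with the translating factor on the right, so one needs the observation (also in \cite{fe}) that $n\oplus\V=\V$-translated, i.e.\ that for a \emph{principal} ultrafilter $n$, $n\oplus\V$ is just the $n$-shift of $\V$, and the $n$-shift of $\V$ is $\fe$-above $\V$ since every finite $F$ in a set of $\V$-shift has $F-n$ (a leftward shift — careful!) back in the $\V$-set. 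Since $\fe$ uses rightward translates, one actually checks $\V\fe n\oplus\V$ directly from the definition: if $B\in n\oplus\V$ then $B-n\in\V$ and any finite $F\subseteq B-n$ has $F+n\subseteq B$, giving $\V\fe n\oplus\V$ pointwise; passing to the limit along $\U$ and using that $\fe$-comparability with a fixed $\V$ on the left is a closed condition (again Theorem \ref{fondamentali} or a direct compactness argument) yields $\V\fe\U\oplus\V$.

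\smallskip
\textbf{Main obstacle.} The only real subtlety is the asymmetry between left and right in $(\bN,\oplus)$ versus the one-sided (rightward) nature of $\fe$: one must be careful that $\U\oplus\V$ genuinely dominates \emph{both} arguments, and the verification that it dominates $\V$ requires either invoking the precise form of Theorem \ref{fondamentali} correctly or redoing a short closed-condition/continuity argument. Everything else — transitivity, the triviality $\U\fe\U\oplus\V$, the reduction via Theorem \ref{fondamentali} — is routine. So the proof should be only a few lines: apply Theorem \ref{fondamentali} to get $\U\fe\U\oplus\V$, argue $\V\fe\U\oplus\V$ via the principal-shift observation plus a limit, and conclude $\W=\U\oplus\V$ is the desired common upper bound.
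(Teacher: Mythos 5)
Your final argument is correct, but note that the paper itself gives no proof of this corollary: it is imported verbatim from \cite{fe} (Corollary 12 there), and the paper merely adds the remark, immediately afterwards, that the stronger statement $\U,\V\fe\U\oplus\V$ holds. What you actually prove is precisely that stronger remark, and your route is sound: $\U\fe\U\oplus\V$ is immediate from Theorem \ref{fondamentali} since $\U\oplus\V\in\overline{\{\U\oplus\mathcal{X}\mid \mathcal{X}\in\bN\}}$; for $\V\fe\U\oplus\V$ you check directly from the definition that $\V\fe n\oplus\V$ for each $n\in\N$ (taking $A=B-n$ for $B\in n\oplus\V$), observe that $\U\oplus\V$ is the $\U$-limit of the family $\langle n\oplus\V\mid n\in\N\rangle$ and hence lies in $\overline{\{n\oplus\V\mid n\in\N\}}$, and conclude using that $\{\W\mid\V\fe\W\}=\overline{\{\V\oplus\mathcal{X}\mid\mathcal{X}\in\bN\}}$ is closed, again by Theorem \ref{fondamentali}. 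This is essentially the same mechanism by which the paper later derives Corollary \ref{hui} (the upward cone is a closed two-sided ideal); indeed, once one knows $\mathcal{C}(\V)$ is a closed left ideal containing $\V$, the relation $\V\fe\U\oplus\V$ is a one-line consequence.

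Two small caveats. First, your aside that $\{\U\oplus\mathcal{X}\mid\mathcal{X}\in\bN\}$ is closed because it is the continuous image of a compact space is false in general: in $(\bN,\oplus)$ only the right translations $\mathcal{X}\mapsto\mathcal{X}\oplus\V$ (and the left translations by principal ultrafilters) are continuous, so $\mathcal{X}\mapsto\U\oplus\mathcal{X}$ need not be continuous for nonprincipal $\U$. You correctly note the claim is not needed, so this does not damage the proof, but it should be deleted. Second, the exposition wanders through several abandoned attempts (the search for $\W=\mathcal{Z}\oplus\U$, the worry about rewriting $\U\oplus\V$ as $\V\oplus(\cdots)$) before arriving at the working argument; the final three-step proof is all that is needed.
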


We also recall that, actually, Corollary \ref{updir} can be improved: in fact, for every $\U,\V\in\bN$ we have

\begin{equation*} \U,\V\fe\U\oplus\V. \end{equation*}

Let us introduce the following definition:

\begin{defn} For any $\U\in\bN$ the upward cone generated by $\U$ is the set

\begin{equation*} \mathcal{C}(\U)=\{\V\in\bN\mid \U\fe\V\}. \end{equation*}

\end{defn} 

\begin{cor}[\cite{fe}, Corollary 13]\label{hui} For any $\U\in\bN$, the upward cone $\mathcal{C}(\U)$ is a closed, two-sided ideal in $\bN$. It is the smallest closed right ideal containing $\U$ and therefore it is also the smallest two-sided ideal containing $\U$.\end{cor}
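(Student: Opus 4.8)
The plan is to read off $\mathcal{C}(\U)$ from Theorem \ref{fondamentali} as the closure of a principal right ideal, and then to run the standard bookkeeping inside the compact right topological semigroup $(\bN,\oplus)$. Indeed, Theorem \ref{fondamentali} gives
\[ \mathcal{C}(\U)=\{\V\in\bN\mid\U\fe\V\}=\overline{\{\U\oplus\W\mid\W\in\bN\}}=\overline{\U\oplus\bN}. \]
First I would observe that, since the principal ultrafilter $\U_{0}$ is the identity of $(\bN,\oplus)$, the set $\U\oplus\bN$ is a right ideal containing $\U$; and since each right translation $\V\mapsto\V\oplus\W$ is continuous, the closure of a right ideal is again a right ideal (one has $\rho_{\W}(\overline{\U\oplus\bN})\subseteq\overline{\rho_{\W}(\U\oplus\bN)}\subseteq\overline{\U\oplus\bN}$). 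Hence $\mathcal{C}(\U)$ is a closed right ideal containing $\U$.

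Next, for minimality among closed right ideals, I would argue that if $R$ is any closed right ideal with $\U\in R$ then $\U\oplus\bN\subseteq R$, so $\mathcal{C}(\U)=\overline{\U\oplus\bN}\subseteq\overline{R}=R$; thus $\mathcal{C}(\U)$ is the smallest closed right ideal containing $\U$. To upgrade this to a two-sided ideal it suffices to check that $\mathcal{C}(\U)$ is also a left ideal: given $\V\in\mathcal{C}(\U)$ and $\W\in\bN$ we want $\U\fe\W\oplus\V$; but $\U\fe\V$ by hypothesis and $\V\fe\W\oplus\V$ by the strengthened form of Corollary \ref{updir} (namely $\W,\V\fe\W\oplus\V$), so transitivity of the preorder $\fe$ gives $\U\fe\W\oplus\V$, i.e. $\W\oplus\V\in\mathcal{C}(\U)$.

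Finally, the last assertion follows formally: a two-sided ideal is in particular a right ideal, so every closed two-sided ideal containing $\U$ contains $\mathcal{C}(\U)$, while $\mathcal{C}(\U)$ is itself a closed two-sided ideal containing $\U$; hence it is the smallest one. (If one wants the statement for arbitrary two-sided ideals, one should first note $\bN\oplus\U\oplus\bN\subseteq\mathcal{C}(\U)$ — which again follows by transitivity from $\U\fe\W\oplus\U\fe\W\oplus\U\oplus\X$ — so that $\mathcal{C}(\U)$ is precisely the closure of the principal two-sided ideal generated by $\U$; strictly speaking $\mathcal{C}(\U)$ is the smallest \emph{closed} two-sided ideal containing $\U$, since $\bN\oplus\U\oplus\bN$ need not itself be closed.)

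I do not expect a genuine obstacle here: the real content — that $\fe$ is governed by $\oplus$, and the sharpened directedness $\U,\V\fe\U\oplus\V$ — is already in hand, so the proof is essentially a matter of correctly invoking continuity of right translations and the fact that closures of right ideals are right ideals. The only delicate point is the precise reading of ``smallest two-sided ideal'' in the last clause, which, as noted above, should be phrased in terms of closed ideals (or accompanied by the short density computation sketched there).
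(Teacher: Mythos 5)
Your proof is correct; note that the paper itself offers no proof of this statement, which is imported verbatim from \cite{fe} (Corollary 13 there), so the only thing to compare against is the intended standard argument. Your route --- identifying $\mathcal{C}(\U)$ with $\overline{\U\oplus\bN}$ via Theorem \ref{fondamentali}, using continuity of the right translations $\rho_{\W}$ to see that this closure is a closed right ideal and is minimal among closed right ideals containing $\U$, and deducing the left-ideal property from $\V\fe\W\oplus\V$ together with transitivity of $\fe$ --- is exactly that argument, and your caveat that the final clause should be read as ``smallest \emph{closed} two-sided ideal containing $\U$'' is the correct reading of the statement.
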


Let us note that from Theorem \ref{fondamentali} it easily follows that the relation $\fe$ is not antisymmetric: in fact, if $R$ is a minimal right ideal in $(\bN,\oplus)$ and $\U\in R$ then $\mathcal{C}(\U)=\mathcal{C}(\U\oplus 1)$, so $\U\fe\U\oplus 1$ and $\U\oplus 1\fe \U$.\par
We want to prove that there is a maximum in $(\bN/\mathord\equiv_{fe},\fe)$. Due to Corollary \ref{hui}, since $(\bN/\mathord\equiv_{fe},\fe)$ is an order then to prove that it has a maximum if is enough\footnote{An upward directed ordered set $(A,\leq)$ has at most one maximal element which, if it exists, is the greatest element of the order.} to prove that it has maximal elements.\par
To prove the existence of maximal elements we use Zorn's Lemma. A technical lemma that we need is the following:

\begin{lem}\label{ultraordine} Let $I$ be a totally ordered set. Then there is an ultrafilter $\V$ on $I$ such that, for every element $i\in I$, the set 

\begin{equation*} G_{i}=\{j\in I\mid j\geq i\}.\end{equation*}

is included in $\V$.

\end{lem}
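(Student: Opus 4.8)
The statement asks for an ultrafilter $\V$ on the set $I$ containing all the "upward tails" $G_i = \{j \in I \mid j \geq i\}$. The plan is to show that the family $\{G_i \mid i \in I\}$ has the finite intersection property and then extend it to an ultrafilter by the standard ultrafilter lemma. The key point is that $I$ is totally ordered, so the tails are linearly nested: given $i_1, \dots, i_n \in I$, there is a maximum among them, say $i_k$, and then $G_{i_1} \cap \dots \cap G_{i_n} = G_{i_k}$, which is nonempty since it contains $i_k$ itself. Hence the collection of tails is a filter base (in fact each finite subfamily has nonempty — indeed "large" — intersection), so the filter it generates is proper.

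Concretely, I would proceed as follows. First, if $I = \emptyset$ the statement is vacuous (or one can simply note there are no constraints), so assume $I \neq \emptyset$. Second, form the family $\mathcal{B} = \{G_i \mid i \in I\}$ and verify the finite intersection property using totality of the order as above. Third, let $\mathcal{F}$ be the filter on $I$ generated by $\mathcal{B}$, i.e. the set of supersets of finite intersections of members of $\mathcal{B}$; by the previous step $\emptyset \notin \mathcal{F}$, so $\mathcal{F}$ is a proper filter. Fourth, invoke the ultrafilter lemma (a consequence of Zorn's Lemma) to extend $\mathcal{F}$ to an ultrafilter $\V$ on $I$. Then $G_i \in \mathcal{B} \subseteq \mathcal{F} \subseteq \V$ for every $i \in I$, which is exactly what is required.

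There is essentially no obstacle here: the only thing to check carefully is the finite intersection property, and that follows immediately from the fact that in a totally ordered set any finite subset has a greatest element, whose corresponding tail is contained in all the others. The rest is the routine application of the ultrafilter lemma. One should just be mindful that the ultrafilter produced need not be nonprincipal — indeed if $I$ has a greatest element $m$ then $G_m = \{m\}$ forces $\V$ to be the principal ultrafilter at $m$ — but the statement does not require nonprincipality, so this is harmless. (In the intended application $I$ will be a chain in Zorn's Lemma with no top element, so the resulting $\V$ will automatically be nonprincipal, but that is not needed for the lemma itself.)
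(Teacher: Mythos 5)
Your proposal is correct and follows essentially the same route as the paper, which simply observes that the tails $\{G_i\}_{i\in I}$ generate a filter (using totality of the order for the finite intersection property) and then extends it to an ultrafilter. Your write-up is just a more careful spelling-out of that one-line argument.
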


\begin{proof} We have just to observe that $\{G_{i}\}_{i\in I}$ is a filter and to recall that every filter can be extended to an ultrafilter.\end{proof}

The key property of these ultrafilters is the following:

\begin{prop}\label{ultraordine2} Let $I$ be a totally ordered set and let $\V$ be given as in Lemma \ref{ultraordine}. Then for every $A\in \V$ and $i\in I$ there exists $j\in A$ such that $i\leq j$. \end{prop}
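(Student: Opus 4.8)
The plan is to argue by contradiction. Suppose that for some $A \in \V$ and some $i \in I$ every element $j \in A$ satisfies $j < i$, i.e. $A \subseteq \{j \in I \mid j < i\}$. Recall from Lemma \ref{ultraordine} that $G_i = \{j \in I \mid j \geq i\} \in \V$. Since $\V$ is a filter we would then have $A \cap G_i \in \V$. But $A \cap G_i = \emptyset$ by our assumption, contradicting the fact that $\emptyset \notin \V$ (as $\V$ is an ultrafilter, hence a proper filter). Therefore no such $A$ and $i$ exist, which is exactly the claimed property.

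More directly, without contradiction: given $A \in \V$ and $i \in I$, both $A$ and $G_i$ belong to $\V$, so their intersection $A \cap G_i$ belongs to $\V$ and in particular is nonempty. Any element $j$ of this intersection lies in $A$ and satisfies $j \geq i$, which is precisely what we want. I expect this to be entirely routine: the only ingredient beyond the definition of a filter is the observation already packaged in Lemma \ref{ultraordine} that all the tail sets $G_i$ are in $\V$. There is no real obstacle here; the statement is essentially an immediate corollary of the construction, and the totality of the order on $I$ is used only implicitly (through Lemma \ref{ultraordine}) to ensure $\{G_i\}_{i \in I}$ is a filter base in the first place.

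I would present the second, direct version since it is cleaner and avoids invoking "proper filter" explicitly. The proof is short enough that it needs no further structuring.

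\begin{proof} Let $A\in\V$ and $i\in I$. By Lemma \ref{ultraordine} we have $G_{i}\in\V$, and since $\V$ is a filter it follows that $A\cap G_{i}\in\V$; in particular $A\cap G_{i}\neq\emptyset$. Any $j\in A\cap G_{i}$ then satisfies $j\in A$ and $i\leq j$, as desired. \end{proof}
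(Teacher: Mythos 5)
Your direct argument is correct: since $A$ and $G_{i}$ both lie in the filter $\V$, their intersection is in $\V$ and hence nonempty, which immediately yields the desired $j$. The paper omits the proof as straightforward, and this is exactly the routine argument it has in mind.
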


We omit the straightforward proof.\par
In the next Theorem we use the notion of limit ultrafilter. We recall that, given an ordered set $I$, an ultrafilter $\V$ on $I$ and a family $\U_{i}$ of ultrafilters on $\N$, the $\V$-limit of the family $\langle \U_{i}\mid i\in I\rangle$ (denoted by $\V-\lim\limits_{i\in I}\U_{i}$) is the ultrafilter such that, for every $A\subseteq\N$, 

\begin{equation*} A\in\V-\lim\limits_{i\in I}\U_{i}\Leftrightarrow\{i\in I\mid A\in\U_{i}\}\in\V. \end{equation*}

Let us introduce the notion of $\fe$-chain:

\begin{defn} Let $(I,<)$ be an ordered set. We say that the family $\langle \U_{i}\mid i\in I\rangle$ is an $\fe$-chain if for every $i<j\in I$ we have $\U_{i}\fe\U_{j}$.\end{defn}

\begin{thm} Every $\fe$-chain $\langle \U_{i}\mid i\in I\rangle$ has an $\fe$-upper bound $\U$. \end{thm}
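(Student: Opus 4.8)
The plan is to build the upper bound as a limit ultrafilter along a suitable ultrafilter on the index set $I$. Concretely, first I would apply Lemma \ref{ultraordine} to the totally ordered set $(I,<)$ (it is totally ordered because it indexes an $\fe$-chain, so any two indices are comparable) to obtain an ultrafilter $\V$ on $I$ containing every final segment $G_{i}$. Then I set $\U=\V-\lim\limits_{i\in I}\U_{i}$, the $\V$-limit of the chain. It remains to verify that $\U_{i}\fe\U$ for every $i\in I$.

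To prove $\U_{i}\fe\U$, I would fix $i\in I$ and use the characterization from Theorem \ref{fondamentali}: it suffices to show $\U\in\overline{\{\U_{i}\oplus\W\mid\W\in\bN\}}$. Equivalently, unwinding the definition of $\fe$ directly, I take an arbitrary $B\in\U$ and must produce $A\in\U_{i}$ with $A\fe B$. By definition of the $\V$-limit, $\{j\in I\mid B\in\U_{j}\}\in\V$. Since $G_{i}\in\V$ and $\V$ is a filter, the set $\{j\geq i\mid B\in\U_{j}\}$ is in $\V$, hence nonempty; pick such a $j\geq i$. Then $B\in\U_{j}$, and since $\langle\U_{i}\mid i\in I\rangle$ is an $\fe$-chain we have $\U_{i}\fe\U_{j}$, so there is $A\in\U_{i}$ with $A\fe B$. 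This is exactly what was required, so $\U_{i}\fe\U$.

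The main obstacle is essentially bookkeeping rather than a deep difficulty: one has to be careful that $I$ is genuinely totally ordered (so that Lemma \ref{ultraordine} applies), and that the final segments $G_{i}$ together with the "membership" sets $\{j\mid B\in\U_{j}\}$ interact correctly inside $\V$. Proposition \ref{ultraordine2} is the clean packaging of the key consequence we use — that every member of $\V$ meets every final segment — and invoking it streamlines the argument: given $B\in\U$, the set $\{j\in I\mid B\in\U_{j}\}$ belongs to $\V$, so by Proposition \ref{ultraordine2} it contains some $j\geq i$, and we finish as above. If $I$ is empty the statement is vacuous (any ultrafilter is an upper bound), so we may assume $I\neq\emptyset$, which is needed for $\V$ to exist.

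I would also remark, for the reader's orientation, that this is precisely the step that feeds into Zorn's Lemma: together with the fact that $\fe$-chains have upper bounds, one concludes the existence of $\fe$-maximal ultrafilters, and then upward directedness (Corollary \ref{updir}, or the sharper $\U,\V\fe\U\oplus\V$) forces such a maximal element to be a maximum of $(\bN/\mathord\equiv_{fe},\fe)$.
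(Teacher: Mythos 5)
Your proposal is correct and follows essentially the same route as the paper: take $\V$ from Lemma \ref{ultraordine}, form $\U=\V-\lim_{i\in I}\U_{i}$, and for each $B\in\U$ use the fact that $\{j\in I\mid B\in\U_{j}\}\in\V$ meets every final segment (Proposition \ref{ultraordine2}) to find $j\geq i$ with $B\in\U_{j}$ and pull back along $\U_{i}\fe\U_{j}$. The extra remarks about total orderedness of $I$, the empty-chain case, and the detour through Theorem \ref{fondamentali} are harmless but not needed.
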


\begin{proof} Let $\V$ be an ultrafilter on $I$ with the property expressed in Lemma \ref{ultraordine}. We claim that the ultrafilter 

\begin{equation*} \U=\V-\lim\limits_{i\in I} \U_{i} \end{equation*}

is an $\fe$-upper bound for the $\fe$-chain $\langle \U_{i}\mid i\in I\rangle$. We have to prove that $\U_{i}\fe \U$ for every index $i$; let $A$ be an element of $\U$. By definition,

\begin{equation*} A\in \U\Leftrightarrow I_{A}=\{i\in I\mid A\in \U_{i}\} \in \V. \end{equation*}

$I_{A}$ is a set in $\V$ so, by Proposition \ref{ultraordine2}, there is an element $j>i$ in $I_{A}$. Therefore $A\in\U_{j}$ and, since $\U_{i}\fe \U_{j}$, there exists an element $B$ in $\U_{i}$ with $B\fe A$. Hence $\U_{i}\fe\U$, and the thesis is proved.\end{proof}

As an immediate consequence we have that:

\begin{cor} Every $\fe$-chain $\langle [\U_{i}]\mid i\in I\rangle$ has an upper bound $[\U]$. \end{cor}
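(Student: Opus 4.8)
The plan is to reduce this to the theorem just proved by passing between ultrafilters and their $\equiv_{fe}$-classes. So, given an $\fe$-chain $\langle [\U_{i}]\mid i\in I\rangle$ in $(\bN/\mathord\equiv_{fe},\fe)$, I would first use the axiom of choice (already invoked in the preceding development, e.g.\ in Lemma \ref{ultraordine}) to pick, for each $i\in I$, a representative ultrafilter $\U_{i}$ of the class $[\U_{i}]$.

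Next I would observe that $\langle \U_{i}\mid i\in I\rangle$ is an $\fe$-chain of ultrafilters in the sense of the earlier definition: for $i<j$, the hypothesis $[\U_{i}]\fe[\U_{j}]$ unwinds, by the definition of the induced ordering on equivalence classes, to $\U_{i}\fe\U_{j}$. (That the induced ordering is well defined is exactly the fact that $\fe$ is a preorder and hence compatible with $\equiv_{fe}$, a point already recorded in the paper.) Then I would apply the preceding theorem to obtain an ultrafilter $\U$ with $\U_{i}\fe\U$ for every $i\in I$.

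Finally, reading $\U_{i}\fe\U$ back at the level of classes gives $[\U_{i}]\fe[\U]$ for every $i$, so $[\U]$ is an upper bound for the given chain. I do not expect any genuine obstacle here: the entire content is carried by the previous theorem, and the only things to handle are the harmless translation between ultrafilters and their $\equiv_{fe}$-classes and the (already-used) appeal to choice when selecting representatives.
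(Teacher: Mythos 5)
Your proposal is correct and is exactly the argument the paper intends: the corollary is stated as an immediate consequence of the preceding theorem, obtained by choosing representatives, applying the theorem to the resulting $\fe$-chain of ultrafilters, and passing back to $\equiv_{fe}$-classes.
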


Being an upward directed set with maximal elements, $(\bN/\mathord{\equiv_{fe}},\fe)$ has a maximum, that we denote by $M$.

\begin{defn} We say that an ultrafilter $\U$ on $\N$ is maximal if $[\U]=M$. We denote by $\mathcal{M}$ the set of maximal ultrafilters.\end{defn}

By definition, for every ultrafilter $\U$ we have the following equivalences: 

\begin{equation*} [\U]=M\Leftrightarrow \U\in\mathcal{M}\Leftrightarrow \V\fe\U \ \mbox{for every} \ \V\in\bN. \end{equation*}

In particular, we can characterize $\mathcal{M}$ in terms of the $\fe$-cones:

\begin{cor}\label{zumpa} $\mathcal{M}=\bigcap\limits_{\U\in\bN} \mathcal{C}(\U).$\end{cor}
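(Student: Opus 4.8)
The plan is to prove the equality $\mathcal{M}=\bigcap_{\U\in\bN}\mathcal{C}(\U)$ by a direct double-inclusion argument, unpacking the definitions of $\mathcal{M}$, of the cones $\mathcal{C}(\U)$, and of the maximum $M$ that was just established to exist. The main point is purely order-theoretic: $\mathcal{M}$ is the set of representatives of the greatest equivalence class, and membership in every cone says exactly that an ultrafilter dominates every ultrafilter; these two conditions should be visibly equivalent.

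First I would prove $\mathcal{M}\subseteq\bigcap_{\U\in\bN}\mathcal{C}(\U)$. Take $\W\in\mathcal{M}$, so $[\W]=M$. By the chain of equivalences displayed right before the corollary, $[\W]=M$ is equivalent to "$\V\fe\W$ for every $\V\in\bN$". Fix an arbitrary $\U\in\bN$; then $\U\fe\W$, which by the definition of the upward cone means precisely $\W\in\mathcal{C}(\U)$. Since $\U$ was arbitrary, $\W\in\bigcap_{\U\in\bN}\mathcal{C}(\U)$.

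For the reverse inclusion $\bigcap_{\U\in\bN}\mathcal{C}(\U)\subseteq\mathcal{M}$, let $\W\in\bigcap_{\U\in\bN}\mathcal{C}(\U)$. Then for every $\U\in\bN$ we have $\W\in\mathcal{C}(\U)$, i.e. $\U\fe\W$. Thus $\W$ dominates every ultrafilter, so by the same displayed equivalence $[\W]=M$, i.e. $\W\in\mathcal{M}$. Combining the two inclusions gives the claimed equality. I do not anticipate a real obstacle here: the only care needed is to invoke correctly the boxed equivalence "$[\U]=M\Leftrightarrow \U\in\mathcal{M}\Leftrightarrow \V\fe\U$ for every $\V\in\bN$", whose validity rests on the earlier fact that $(\bN/\mathord{\equiv_{fe}},\fe)$ is upward directed (Corollary \ref{updir}) and has maximal elements, hence a genuine maximum $M$. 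Everything else is a routine rewriting of the definition of $\mathcal{C}(\U)$.
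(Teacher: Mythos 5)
Your proof is correct and follows essentially the same route as the paper: both arguments are a direct unwinding of the definitions of $\mathcal{M}$ and $\mathcal{C}(\U)$ via the displayed equivalence $[\U]=M\Leftrightarrow\V\fe\U$ for every $\V\in\bN$. The only cosmetic difference is that the paper obtains the reverse inclusion by noting $\mathcal{C}(\U)=\mathcal{M}$ for a maximal $\U$, whereas you argue it pointwise; the content is identical.
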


\begin{proof} We have just to observe that $\mathcal{M}\subseteq\mathcal{C}(\U)$ for every ultrafilter $\U$ and that, if $\U$ is a maximal ultrafilter, then $\mathcal{C}(\U)=\mathcal{M}$.\end{proof}

We can now prove our main result:

\begin{thm}\label{eccolo} $\mathcal{M}=\overline{K(\bN,\oplus)}$.
\end{thm}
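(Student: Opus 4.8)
The plan is to prove the two inclusions $\mathcal{M}\subseteq\overline{K(\bN,\oplus)}$ and $\overline{K(\bN,\oplus)}\subseteq\mathcal{M}$ separately, using the characterization $\mathcal{M}=\bigcap_{\U\in\bN}\mathcal{C}(\U)$ from Corollary \ref{zumpa} together with the structural facts about $\mathcal{C}(\U)$ recorded in Corollary \ref{hui}. Recall from semigroup theory (as in \cite{rif12}) that $K(\bN,\oplus)$, the minimal two-sided ideal, is contained in every two-sided ideal of $(\bN,\oplus)$, and hence $\overline{K(\bN,\oplus)}$ is contained in every closed two-sided ideal.

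First I would prove $\overline{K(\bN,\oplus)}\subseteq\mathcal{M}$. By Corollary \ref{hui}, for each $\U\in\bN$ the cone $\mathcal{C}(\U)$ is a closed two-sided ideal of $(\bN,\oplus)$. Since $\overline{K(\bN,\oplus)}$ is the smallest closed two-sided ideal (it is closed, it is a two-sided ideal because the closure of a two-sided ideal is one, and it is contained in any closed ideal since $K(\bN,\oplus)$ is contained in any ideal), we get $\overline{K(\bN,\oplus)}\subseteq\mathcal{C}(\U)$ for every $\U$. Intersecting over all $\U\in\bN$ and invoking Corollary \ref{zumpa} yields $\overline{K(\bN,\oplus)}\subseteq\bigcap_{\U\in\bN}\mathcal{C}(\U)=\mathcal{M}$.

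For the reverse inclusion $\mathcal{M}\subseteq\overline{K(\bN,\oplus)}$, I would argue that $\mathcal{M}$ is itself a nonempty closed two-sided ideal of $(\bN,\oplus)$, so it must contain $K(\bN,\oplus)$ and, being closed, also $\overline{K(\bN,\oplus)}$; combined with the first inclusion this forces equality. Nonemptiness of $\mathcal{M}$ is exactly the existence of the maximum $M$ established just before the statement. That $\mathcal{M}$ is closed and two-sided can be read off from $\mathcal{M}=\bigcap_{\U\in\bN}\mathcal{C}(\U)$: an intersection of closed sets is closed, and an intersection of two-sided ideals is a two-sided ideal (one only needs that the intersection is nonempty, which we have). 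Alternatively, and perhaps more cleanly, pick any maximal $\U_0\in\mathcal{M}$; then $\mathcal{M}=\mathcal{C}(\U_0)$ by the observation in the proof of Corollary \ref{zumpa}, and Corollary \ref{hui} directly says $\mathcal{C}(\U_0)$ is a closed two-sided ideal.

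The main obstacle — really the only nontrivial point — is the assertion that the smallest closed two-sided ideal of $(\bN,\oplus)$ is precisely $\overline{K(\bN,\oplus)}$, i.e. that the closure of $K(\bN,\oplus)$ is a two-sided ideal and is contained in every closed ideal. The containment in every closed ideal is immediate from $K(\bN,\oplus)\subseteq$ every ideal plus taking closures; the fact that the closure of an ideal in a compact right-topological semigroup is again an ideal is standard (continuity of $\rho_p$ for each $p$ gives the right-ideal part; for the left-ideal part one uses that $K(\bN,\oplus)$ is a union of minimal left ideals and a density argument, or simply cites the corresponding result in \cite{rif12}). Once this is in hand, both inclusions are one-line consequences of Corollaries \ref{hui} and \ref{zumpa}, so I would keep the write-up short and lean on the cited semigroup facts rather than reproving them.
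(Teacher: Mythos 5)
Your first inclusion, $\overline{K(\bN,\oplus)}\subseteq\mathcal{M}$, is fine: each cone $\mathcal{C}(\U)$ is a closed two-sided ideal by Corollary \ref{hui}, hence contains the smallest closed two-sided ideal $\overline{K(\bN,\oplus)}$, and intersecting over all $\U$ gives the claim via Corollary \ref{zumpa}. The problem is the reverse inclusion. What you offer for ``$\mathcal{M}\subseteq\overline{K(\bN,\oplus)}$'' is: $\mathcal{M}$ is a nonempty closed two-sided ideal, hence contains $K(\bN,\oplus)$, hence contains $\overline{K(\bN,\oplus)}$. That is the inclusion $\overline{K(\bN,\oplus)}\subseteq\mathcal{M}$ all over again; it says nothing about the opposite containment, and ``combined with the first inclusion'' it forces nothing. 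A closed two-sided ideal containing $\overline{K(\bN,\oplus)}$ need not equal it ($\bN$ itself is one such), so knowing that $\mathcal{M}$ is a nonempty closed two-sided ideal does not pin it down to $\overline{K(\bN,\oplus)}$. The same circularity affects your ``cleaner'' variant: knowing $\mathcal{M}=\mathcal{C}(\U_{0})$ for a maximal $\U_{0}$ only helps if you already know $\U_{0}\in\overline{K(\bN,\oplus)}$, which is precisely what is to be proved.

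The missing step is short and is exactly what the paper does: pick $\U\in K(\bN,\oplus)$. By Corollary \ref{zumpa}, $\mathcal{M}\subseteq\mathcal{C}(\U)$. By Corollary \ref{hui}, $\mathcal{C}(\U)$ is the \emph{smallest} closed two-sided ideal containing $\U$; since $\overline{K(\bN,\oplus)}$ is a closed two-sided ideal containing this particular $\U$, minimality gives $\mathcal{C}(\U)\subseteq\overline{K(\bN,\oplus)}$, hence $\mathcal{M}\subseteq\mathcal{C}(\U)=\overline{K(\bN,\oplus)}$. All the ingredients for this are already in your write-up (you quote the minimality clause of Corollary \ref{hui} but never use it); you only need to apply Corollary \ref{zumpa} to an ultrafilter lying in $K(\bN,\oplus)$ rather than to an arbitrary or a maximal one.
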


\begin{proof} Given any ultrafilter $\U$, by Proposition \ref{fondamentali} we know that $\mathcal{C}(\U)$ is the minimal closed bilateral ideal containing $\U$. By Corollary $\ref{zumpa}$ we know that $\mathcal{M}=\bigcap\limits_{\U\in\bN}\mathcal{C}(\U)$ so, in particular, being the intersection of a family of closed bilateral ideal $\mathcal{M}$ itself is a closed bilater ideal. So if $\U$ is any ultrafilter in $K(\bN,\oplus)$, we know that:
\begin{enumerate}
[leftmargin=*,label=(\roman*),align=left ]
	\item $\mathcal{M}\subseteq\mathcal{C}(\U)$;
	\item $\mathcal{C}(\U)=\overline{K(\bN,\oplus)}$.
\end{enumerate}

So $\mathcal{M}$ is a closed bilateral ideal included in $\overline{K(\bN,\oplus)}$, and the only such ideal is $\overline{K(\bN,\oplus)}$ itself.\end{proof}
This result has a few interesting consequences:

\begin{cor}\label{uno} An ultrafilter $\U$ is maximal if and only if every element $A$ of $\U$ is piecewise syndetic. \end{cor}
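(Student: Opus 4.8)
The plan is to reduce the statement to a classical fact about piecewise syndetic sets via the main result already proved. Recall (see \cite{rif12}, Theorem 4.40) that a set $A\subseteq\N$ is piecewise syndetic if and only if $\Theta_{A}\cap K(\bN,\oplus)\neq\emptyset$, equivalently if and only if $A$ belongs to some ultrafilter lying in $K(\bN,\oplus)$. This is the only external input I would use.

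First I would rewrite the claim using Theorem \ref{eccolo}: $\U$ is maximal if and only if $\U\in\mathcal{M}=\overline{K(\bN,\oplus)}$. Next I would observe that the clopen sets $\Theta_{A}$ with $A\in\U$ form a neighborhood base of $\U$ in $\bN$, so that $\U\in\overline{K(\bN,\oplus)}$ holds precisely when $\Theta_{A}\cap K(\bN,\oplus)\neq\emptyset$ for every $A\in\U$. Combining this with the cited characterization gives both implications at once: if $\U$ is maximal and $A\in\U$, then the neighborhood $\Theta_{A}$ of $\U$ meets $K(\bN,\oplus)$, hence $A$ is piecewise syndetic; conversely, if every $A\in\U$ is piecewise syndetic, then every basic neighborhood $\Theta_{A}$ of $\U$ meets $K(\bN,\oplus)$, so $\U\in\overline{K(\bN,\oplus)}=\mathcal{M}$, i.e. $\U$ is maximal.

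As an alternative for the forward direction, which avoids quoting the full characterization, I would fix any $\V\in K(\bN,\oplus)$ (whose members are all piecewise syndetic); since $\U$ is maximal we have $\V\fe\U$, so for each $B\in\U$ there is $A\in\V$ with $A\fe B$, and then $B$ is piecewise syndetic by Proposition \ref{trs}(ii). I do not expect a genuine obstacle here: the argument is essentially a bookkeeping exercise once Theorem \ref{eccolo} is available. The only points requiring care are stating the neighborhood-base equivalence for membership in a closure correctly, and citing precisely the standard equivalence between piecewise syndeticity of $A$ and $\Theta_{A}$ meeting $K(\bN,\oplus)$.
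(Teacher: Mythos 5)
Your proposal is correct and follows essentially the same route as the paper: the paper's proof simply combines Theorem \ref{eccolo} with the well-known characterization of $\overline{K(\bN,\oplus)}$ as the set of ultrafilters all of whose members are piecewise syndetic (citing \cite{rif12}), which is exactly what you do, merely unpacking that characterization into the neighborhood-base argument and the standard fact that $A$ is piecewise syndetic iff $\Theta_{A}$ meets $K(\bN,\oplus)$. Your alternative forward direction via Proposition \ref{trs}(ii) is also sound and is in fact an instance of the paper's Proposition \ref{consequence}.
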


\begin{proof} This follows from this well-known characterization of $\overline{K(\bN,\oplus)}$: an ultrafilter $\U$ is in $\overline{K(\bN,\oplus)}$ if and only if every element $A$ of $\U$ is piecewise syndetic (see, e.g., \cite{rif12}). \end{proof}

As mentioned in the introduction, the notion of finite embeddability is related with some properties that arose in combinatorial number theory. A particularity of maximal ultrafilters is that every set in a maximal ultrafilter satisfies many of these combinatorial properties:

\begin{defn} We say that a property $P$ is $\leq_{fe}$-upward invariant if the following holds: for every $A,B\subseteq \N$, if $P(A)$ holds and $A\leq_{fe} B$ then $P(B)$ holds.\par
We way that $P$ is partition regular if the family $S_{P}=\{A\subseteq\N\mid P(A)$ holds$\}$ contains an ultrafilter (i.e., if for every finite partition $\N=A_{1}\cup... \cup A_{n}$ there exists at least one index $i\leq n$ such that $A_{i}\in S_{P})$.\end{defn}

By Proposition \ref{trs} it follows that the following properties are $\leq_{fe}$-upward invariant:

\begin{enumerate}
[leftmargin=*,label=(\roman*),align=left ]
	\item $A$ is thick;
	\item\label{ps} $A$ is piecewyse syndetic;
	\item\label{alap} $A$ contains arbitrarily long arithmetic progressions;
	\item\label{bd} $BD(A)>0$, where $BD(A)$ is the upper Banach density of $A$.
\end{enumerate}
In particular, properties \ref{ps}, \ref{alap}, \ref{bd} are also partition regular. These kind of properties are important in relation with maximal ultrafilters:

\begin{prop}\label{consequence} Let $P$ be a partition regular $\leq_{fe}$-upward invariant property of sets. Then for every maximal ultrafilter $\U$, for every $A\in\U$, $P(A)$ holds.\end{prop}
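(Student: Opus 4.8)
The plan is to argue directly from the definition of maximal ultrafilter together with the partition regularity hypothesis. Let $\U$ be a maximal ultrafilter and fix $A\in\U$. Since $P$ is partition regular, the family $S_P=\{C\subseteq\N\mid P(C)\text{ holds}\}$ contains some ultrafilter $\W$; in particular there is a set $D\in\W$ with $P(D)$. The key step is then to invoke the characterization $[\U]=M\Leftrightarrow \V\fe\U$ for every $\V\in\bN$: applying this with $\V=\W$ gives $\W\fe\U$. By the definition of finite embeddability between ultrafilters, since $A\in\U$ there exists $D'\in\W$ with $D'\fe A$.

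I would then finish by combining this with the $\leq_{fe}$-upward invariance of $P$. We have $D'\in\W\subseteq S_P$ (here I use that $\W$ is an ultrafilter contained in $S_P$, so every element of $\W$ has property $P$; if one prefers, replace $D'$ by $D'\cap D$, which is still in $\W$ and still $\leq_{fe}$-embeddable in $A$ since it is a subset of $D'$, and has property $P$ because it lies in $\W$). Thus $P(D')$ holds and $D'\leq_{fe} A$, so by upward invariance $P(A)$ holds. Since $A\in\U$ was arbitrary, this proves the proposition.

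The only slightly delicate point is justifying that every element of $\W$ has property $P$, i.e. that $\W\subseteq S_P$ rather than merely $\W\in S_P$ in some loose sense — but this is exactly what "$S_P$ contains an ultrafilter" means, so there is really no obstacle here. I would write the proof as: let $\W$ be an ultrafilter with $\W\subseteq S_P$; by maximality of $\U$ we get $\W\fe\U$; given $A\in\U$, pick $B\in\W$ with $B\fe A$; then $P(B)$ holds since $B\in\W\subseteq S_P$, and $B\fe A$, so $P(A)$ holds by $\leq_{fe}$-upward invariance.

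\begin{proof} Since $P$ is partition regular, there is an ultrafilter $\W$ on $\N$ with $\W\subseteq S_{P}$, i.e.\ $P(B)$ holds for every $B\in\W$. Let $\U$ be a maximal ultrafilter. By the characterization of maximal ultrafilters, $\V\fe\U$ for every $\V\in\bN$; in particular $\W\fe\U$. Now let $A\in\U$. By the definition of $\fe$ for ultrafilters, there exists $B\in\W$ such that $B\fe A$. Since $B\in\W\subseteq S_{P}$, the property $P(B)$ holds; since $P$ is $\leq_{fe}$-upward invariant and $B\fe A$, we conclude that $P(A)$ holds. As $A\in\U$ was arbitrary, this completes the proof.\end{proof}
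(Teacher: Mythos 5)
Your proof is correct and follows essentially the same argument as the paper: take an ultrafilter contained in $S_{P}$ (which exists by partition regularity), use maximality of $\U$ to get finite embeddability of that ultrafilter into $\U$, and then transfer the property along $\fe$ by upward invariance. The only differences are notational.
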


\begin{proof} Let $P$ be given, let $S_{P}=\{A\subseteq\N\mid P(A)$ holds$\}$ and let $\V\subseteq S_{P}$ (such an ultrafilter exists because $P$ is partition regular). Let $B\in\U$. Since $\U$ is maximal, $\V\fe\U$. Let $A\in\V$ be such that $A\fe B$. Since $P$ is $\fe$-upward invariant and $P(A)$ holds, we obtain that $P(B)$ holds, hence we have the thesis. \end{proof}

E.g., as a consequence of Proposition \ref{consequence} we can prove the following:

\begin{cor}\label{due} Let $\U\in\overline{K(\bN,\oplus)}$. Then:

\begin{enumerate}
[leftmargin=*,label=(\roman*),align=left ]

\item each set $A$ in $\U$ has positive Banach density;
\item each set $A$ in $\U$ contains arbitrarily long arithmetic progressions;
\item each set $A$ in $\U$ is piecewise syndetic.
\end{enumerate}

\end{cor}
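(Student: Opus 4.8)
The plan is to obtain the three assertions as immediate instances of the machinery already set up, with no new combinatorial work. First I would rewrite the hypothesis: by Theorem~\ref{eccolo} we have $\overline{K(\bN,\oplus)}=\mathcal{M}$, so $\U\in\overline{K(\bN,\oplus)}$ means precisely that $\U$ is a maximal ultrafilter. Once this identification is in place, each of (i), (ii), (iii) is a direct application of Proposition~\ref{consequence}: it suffices to observe that the three properties involved --- ``$BD(A)>0$'', ``$A$ contains arbitrarily long arithmetic progressions'', and ``$A$ is piecewise syndetic'' --- are simultaneously $\leq_{fe}$-upward invariant and partition regular, and then Proposition~\ref{consequence} forces every member of the maximal ultrafilter $\U$ to enjoy them.

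The verification of these two properties for the relevant $P$'s has essentially already been carried out in the discussion preceding Proposition~\ref{consequence}: the $\leq_{fe}$-upward invariance is exactly items (iv), (iii), (ii) of Proposition~\ref{trs}, and the partition regularity of the corresponding families is the remark that properties~\ref{ps}, \ref{alap}, \ref{bd} are partition regular. For completeness I would recall the classical sources of the latter: van der Waerden's theorem gives partition regularity of the arithmetic-progression property, piecewise syndeticity is partition regular by the standard fact about $K(\bN,\oplus)$ (see \cite{rif12}), and the family of sets of positive upper Banach density is upward closed and partition regular because $BD$ is finitely subadditive with $BD(\N)=1$, hence it contains an ultrafilter. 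Plugging each of these into Proposition~\ref{consequence} yields (i), (ii), (iii) respectively.

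There is no real obstacle in this argument; the entire content has been front-loaded into Theorem~\ref{eccolo} and Proposition~\ref{consequence}. The only points that require care are essentially bibliographic --- making sure the three classical partition-regularity statements are correctly cited --- together with the trivial observation that these are exactly the $P$'s to which Proposition~\ref{consequence} applies. So in the write-up I would simply say: by Theorem~\ref{eccolo} the ultrafilter $\U$ is maximal; properties (i)--(iii) are partition regular and $\leq_{fe}$-upward invariant by Proposition~\ref{trs} and the classical results just mentioned; apply Proposition~\ref{consequence}.
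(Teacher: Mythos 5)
Your proposal is correct and follows exactly the route the paper intends: the corollary is presented there as an immediate consequence of Proposition~\ref{consequence} applied to the three partition regular, $\leq_{fe}$-upward invariant properties listed after Proposition~\ref{trs}, with Theorem~\ref{eccolo} identifying $\overline{K(\bN,\oplus)}$ with the maximal ultrafilters. Your added justifications of partition regularity (van der Waerden, the standard fact for piecewise syndeticity, subadditivity of $BD$) are correct and merely make explicit what the paper leaves as known.
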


In particular, by combining Corollaries \ref{uno} and \ref{due} we obtain an alternative proof of the following known results:
\begin{itemize}
	\item every piecewise syndetic set contains arbitrarily long arithmetic progressions;
	\item every piecewise syndetic set has positive upper Banach density.
\end{itemize}

In the forthcoming paper \cite{functions} we will show how, actually, similar arguments can be used to prove combinatorial properties of other families of ultrafilters, e.g. to prove that for every ultrafilter $\U\in\overline{K(\bN,\odot)}$, for every $A\in\U$, $A$ contains arbitrarily long arithmetic progression and it contains a solution to every partition regular homogeneous equation\footnote{An equation $P(x_{1},...,x_{n})=0$ is partition regular if and only if for every finite coloration $\N=C_{1}\cup...\cup C_{n}$ of $\N$ there exists an index $i$ and monocromatic elements $a_{1},...,a_{n}\in C_{i}$ such that $P(a_{1},...,a_{n})=0$.}.

\section{A Direct Nonstandard Proof that $M_{fe}=\overline{K(\bN,\oplus)}$}\label{NS23}

In this section we assume the reader to be familiar with the basics of nonstandard analysis. In particular, we will use the notions of nonstandard extension of subsets of $\N$ and the transfer principle. We refer to \cite{rif5} and \cite{davis} for an introduction to the foundations of nonstandard analysis and to the nonstandard tools that we are going to use.\par
Both in \cite{fe} and in \cite{Tesi} it has been shown that the relation of finite embeddability between sets has a very nice characterization in terms of nonstandard analysis, which allows to study some of its properties in a quite simple, and elegant, way. We recall the characterization (in the following proposition, it is assumed for technical reasons that the nonstandard extension that we consider satisfies at least the $\mathfrak{c}^{+}$-enlarging property\footnote{We recall that a nonstandard extension $^{*}\N$ of $\N$ has the $\mathfrak{c}^{+}$ enlarging property if, for every family $\mathcal{F}$ of subsets of $\N$ with the finite intersection property, the intersection $\bigcap\limits_{A\in\mathcal{F}}$$^{*}A$ is nonempty.}, where $\mathfrak{c}$ is the cardinality of $\mathcal{P}(\N)$):

\begin{prop}[\cite{fe}, Proposition 15]\label{NSCAR} Let $A,B$ be subsets of $\N$. The following two conditions are equivalent:
\begin{enumerate}
[leftmargin=*,label=(\roman*),align=left ]
	\item $A$ is finitely embeddable in $B$;
	\item there is an hypernatural number $\alpha$ in $^{*}\N$ such that $\alpha+A\subseteq$$^{*}B$.
\end{enumerate}
\end{prop}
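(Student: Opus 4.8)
The statement to prove is Proposition 15 from \cite{fe}: for subsets $A,B$ of $\N$, $A\fe B$ iff there is $\alpha\in{}^*\N$ with $\alpha+A\subseteq{}^*B$. Let me sketch a proof.

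The plan is to prove both directions by combining the finite nature of the embeddability condition with transfer and the $\mathfrak{c}^+$-enlarging property.

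**Direction (ii) $\Rightarrow$ (i):** Suppose $\alpha+A\subseteq{}^*B$ for some $\alpha\in{}^*\N$. Let $F=\{a_1,\dots,a_n\}\subseteq A$ be finite. Then $\alpha+a_i\in{}^*B$ for each $i$, so the hypernatural $\alpha$ witnesses the internal (in fact, first-order) statement "there exists $x$ such that $x+a_1\in{}^*B\wedge\dots\wedge x+a_n\in{}^*B$". This is the transfer of the statement "there exists $x\in\N$ such that $x+a_1\in B\wedge\dots\wedge x+a_n\in B$", which is a genuine first-order statement with parameters $a_1,\dots,a_n,B$ — here I should be careful and phrase it as: the set $\{k\in\N : k+F\subseteq B\}$ has nonempty nonstandard extension (since $\alpha$ lies in it), hence is itself nonempty by transfer. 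So some $k\in\N$ has $k+F\subseteq B$; since $F$ was arbitrary, $A\fe B$.

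**Direction (i) $\Rightarrow$ (ii):** Suppose $A\fe B$. For each finite $F\subseteq A$ set $C_F=\{\beta\in{}^*\N : \beta+F\subseteq{}^*B\}$ — equivalently, $C_F$ is the nonstandard extension of $\{k\in\N : k+F\subseteq B\}$, which is nonempty by hypothesis. The family $\{C_F : F\subseteq A \text{ finite}\}$ has the finite intersection property: given finitely many finite sets $F_1,\dots,F_m\subseteq A$, their union $F=F_1\cup\dots\cup F_m$ is a finite subset of $A$, and any $k\in\N$ with $k+F\subseteq B$ lies in $\bigcap_i C_{F_i}$ (indeed in $C_F$); such $k$ exists because $A\fe B$. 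Now apply the $\mathfrak{c}^+$-enlarging property: the family $\{\{k\in\N : k+F\subseteq B\} : F\subseteq A\text{ finite}\}$ consists of subsets of $\N$ with the finite intersection property, so $\bigcap_F {}^*\{k : k+F\subseteq B\}\neq\emptyset$. Pick $\alpha$ in this intersection. Then for every finite $F\subseteq A$, $\alpha+F\subseteq{}^*B$; in particular, for every single $a\in A$, taking $F=\{a\}$ gives $\alpha+a\in{}^*B$. Hence $\alpha+A\subseteq{}^*B$.

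The main technical point — and the only place the enlarging hypothesis is needed — is the saturation argument in direction (i) $\Rightarrow$ (ii): one needs the family indexed by all finite subsets of $A$ (of which there may be countably many, but in general up to $\mathfrak{c}$ many once we think of $A$ ranging over all subsets) to have a common point in the nonstandard model, and this is exactly guaranteed by the $\mathfrak{c}^+$-enlarging property applied to a family of at most $\mathfrak{c}$ subsets of $\N$ with the finite intersection property. Everything else is routine transfer and the observation that the embeddability condition "localizes" to finite sets, so that the union of finitely many finite subsets of $A$ is again a finite subset of $A$.
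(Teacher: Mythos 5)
Your proof is correct, and it is the standard argument for this equivalence: transfer of the nonemptiness of $\{k\in\N : k+F\subseteq B\}$ for the direction from the nonstandard witness to finite embeddability, and the enlarging property applied to the family of these sets (which has the finite intersection property because finite subsets of $A$ are closed under finite unions) for the converse. Note that the paper itself gives no proof of this statement --- it is quoted from \cite{fe} as Proposition 15 --- but your argument matches the one given there, and your remark that countably many sets actually suffice (since $A\subseteq\N$ has only countably many finite subsets) is accurate.
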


We use Proposition \ref{NSCAR} to reprove directly, with nonstandard methods, Theorem \ref{eccolo}: 

\begin{proof}[Theorem \ref{eccolo}] Let $A$ be a set in $\U$, and let $\V$ be an ultrafilter on $\N$. Since $A$ is piecewise syndetic there is a natural number $n$ such that 

\begin{equation*} T=\bigcup_{i=1}^{n} (A+i) \end{equation*} 

is thick. By transfer\footnote{Thick set can be characterized by mean of nonstandard analysis as follows (see e.g. \cite{Tesi}): a set $T\subseteq\N$ is thick if and only if $T^{*}$ contains an interval of infinite lenght.} it follows that there are hypernatural numbers $\alpha\in$$^{*}\N$ and $\eta\in$$^{*}\N\setminus\N$ such that the interval $[\alpha,\alpha+\eta]$ is included in $^{*}T$. In particular, since $\eta$ is infinite, $\alpha+\N\subseteq$$^{*}T$.\par
For every $i\leq n$ we consider 

\begin{equation*} B_{i}=\{n\in\N\mid \alpha+n\in\mbox{}^{*}(A+i)\}. \end{equation*}

Since $\bigcup_{i=1}^{n} B_{i}=\N$, there is an index $i$ such that $B_{i}\in\V$. We claim that $B_{i}\fe A$. In fact, by construction $\alpha+B_{i}\subseteq$$^{*}A+i$, so

\begin{equation*} (\alpha-i)+B_{i}\subseteq\mbox{}^{*}A. \end{equation*} 
By Proposition \ref{NSCAR}, this entails that $B_{i}\fe A$, and this proves that $\V\fe\U$ for every ultrafilter $\V$. Hence $\U$ is maximal.\end{proof}

In bibliografia devo aggiungere un lavoro di Beiglbock ed uno di Krautzberger

\end{document}